\newtheorem{theorem}{Theorem}[section]
\newtheorem{conj}{Conjecture}[section]
\newtheorem{lemma}{Lemma}[section]
\newtheorem{remark}{Remark}[section]
\newtheorem{coro}{Corollary}[section]
\newtheorem{prop}{Proposition}[section]
\def\RR{{\mathrm R}}
\def\WW{{\mathrm W}}
\def\EE{{\mathrm E}}
\def\Rc{{\mathrm {Rc}}}
\def\SS{{\mathrm S}}
\def\diag{\mathrm{Diag}}
\def\id{\mathrm{Id}}
\begin{document}

\title{Einstein four-manifolds of Pinched Sectional Curvature}

\author{Xiaodong Cao}

\address{Department of Mathematics,
 Cornell University, Ithaca, NY 14853-4201}
\email{cao@math.cornell.edu}

\author{Hung Tran}

\address{Department of Mathematics and Statistics,
Texas Tech University, Lubbock, TX 79409}
\email{hung.tran@ttu.edu}

\renewcommand{\subjclassname}{%
  \textup{2010} Mathematics Subject Classification}
\subjclass[2010]{Primary 53C25}

\date{ \today}
\begin{abstract} In this paper, we obtain classification of four-dimensional Einstein manifolds with positive Ricci curvature and pinched sectional curvature. In particular, the first result deals with an upper bound on the sectional curvature, improving a theorem of E. Costa. The second is a generalization of D. Yang's result assuming an upper bound on the difference between sectional curvatures.  
\end{abstract}
\maketitle

\section{Introduction}
A Riemannian manifold $(M,g)$ is called Einstein if it satisfies 
\begin{equation}
\label{einstein}
\Rc=\lambda g,
\end{equation}
where $\Rc$ is its Ricci curvature and $\lambda$ is a constant. A fundamental problem in differential geometry is to determine whether a smooth manifold admits an Einstein metric. In dimension four, the Hitchin-Thorpe inequality gives a topological obstruction to the existence of such a structure. Currently, there is no known topological obstruction in higher dimensions. As a consequence, it is of great interests to identify all Einstein manifolds under natural and suitable assumptions. 
 
Generally, by Myer's theorem \cite{myer41}, if $\lambda>0$ then $M$ is compact and has a finite fundamental group. In dimension four, there are not many compact examples. The well-known ones are symmetric spaces including: the sphere $\mathbb{S}^4$ with round metric $g_0$, the complex protective space $\mathbb{CP}^2$ with Fubini-Study metric $g_{FS}$, the product of two unit round spheres $\mathbb{S}^2\times \mathbb{S}^2$, and their quotients. Interestingly, they all have non-negative sectional curvatures and are only known examples with that property. That motivates the following folklore  conjecture (see, for example, \cite{yangdg00}):

\begin{conj} An Einstein four manifold with $\lambda>0$ and non-negative sectional curvature must be either $(\mathbb{S}^4,g_0)$, $(\mathbb{CP}^2, g_{FS})$, $\mathbb{S}^2\times \mathbb{S}^2$ or quotient. 
\end{conj}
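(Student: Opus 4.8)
The plan is to exploit the rigidity available in four dimensions, where an orientation splits the Weyl tensor as $W=W^+\oplus W^-$ and the second Bianchi identity together with \eqref{einstein} makes each half divergence free. Regarding $W^{\pm}$ as trace free symmetric endomorphisms of the rank three bundles $\Lambda^{\pm}$ of (anti-)self-dual two-forms, the Einstein condition forces the curvature operator on $\Lambda^2=\Lambda^+\oplus\Lambda^-$ to be block diagonal, with blocks $W^{\pm}+\frac{s}{12}\,\id$, where $s=4\lambda>0$ is the scalar curvature. Consequently every sectional curvature is an average of values $\langle(W^{\pm}+\frac{s}{12}\,\id)\,\omega^{\pm},\omega^{\pm}\rangle$ read off from the eigenvalues of $W^{\pm}$, so the hypothesis $K\ge 0$ becomes a purely algebraic condition on those eigenvalues. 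The global input will be the Gauss--Bonnet and signature formulas, which for an Einstein four-manifold read
\begin{equation}
\chi(M)=\frac{1}{8\pi^2}\int_M\left(\frac{s^2}{24}+|W^+|^2+|W^-|^2\right)dV,\qquad \tau(M)=\frac{1}{12\pi^2}\int_M\left(|W^+|^2-|W^-|^2\right)dV,
\end{equation}
and which bound $\int_M|W^{\pm}|^2$ in terms of topology and $\int_M s^2$.

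First I would turn $K\ge0$ into pointwise control of the Weyl eigenvalues. A plane is a decomposable unit bivector $\omega=\omega^++\omega^-$ with $|\omega^+|=|\omega^-|=\tfrac{1}{\sqrt2}$, so its sectional curvature is $\frac{s}{12}+\langle W^+\omega^+,\omega^+\rangle+\langle W^-\omega^-,\omega^-\rangle$. Minimizing over such $\omega$ and imposing nonnegativity gives lower bounds on the smallest eigenvalues $w_1^{\pm}$ of $W^{\pm}$, hence, since $\sum_i w_i^{\pm}=0$, upper bounds on the largest eigenvalues $w_3^{\pm}$ and on the cubic invariant $\det W^{\pm}=w_1^{\pm}w_2^{\pm}w_3^{\pm}$. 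This is the algebraic step that, under stronger curvature hypotheses, already appears in the work of Costa and of D. Yang.

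The analytic heart is the Weitzenb\"ock formula for the self-dual Weyl curvature of an Einstein four-manifold,
\begin{equation}
\frac{1}{2}\Delta|W^+|^2=|\nabla W^+|^2+\frac{s}{2}|W^+|^2-c\,\mathrm{tr}\big((W^+)^3\big),
\end{equation}
for a fixed positive constant $c$, with the identical statement for $W^-$. Integrating over the closed manifold annihilates the left-hand side and yields
\begin{equation}
\int_M|\nabla W^+|^2\,dV=\int_M\left(c\,\mathrm{tr}\big((W^+)^3\big)-\frac{s}{2}|W^+|^2\right)dV.
\end{equation}
The aim is to show the right-hand side is nonpositive: this forces $\nabla W^+=0$, and symmetrically $\nabla W^-=0$, so that $\nabla\Rm=0$ and the metric is locally symmetric. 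The classification of Einstein symmetric four-spaces with $\lambda>0$ and $K\ge0$ then leaves exactly $\mathbb{S}^4$, $\mathbb{CP}_2$, $\mathbb{S}^2\times\mathbb{S}^2$ and their quotients, with the degenerate strata $W=0$ (constant curvature, hence $\mathbb{S}^4$) and $W^-=0$ (self-dual Einstein, classified by Hitchin as $\mathbb{S}^4$ or $\mathbb{CP}_2$) accounting for the rigidity.

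The main obstacle is controlling the indefinite reaction term $\mathrm{tr}((W^+)^3)=3\det W^+$. Because $\det W^+$ can be strictly positive, nonnegativity of the sectional curvature does not by itself yield the needed pointwise estimate $c\,\mathrm{tr}((W^+)^3)\le\frac{s}{2}|W^+|^2$; the sharp algebraic inequality $|\mathrm{tr}((W^+)^3)|\le\frac{1}{\sqrt6}|W^+|^3$ only closes the argument where $|W^+|$ is already small compared with $s$, a smallness that the Gauss--Bonnet--signature bounds supply globally only under strong pinching. This is precisely the gap the present paper's theorems are meant to fill: an upper bound on $K$, or a bound on the spread $\max K-\min K$, forces the reaction term to have the favorable sign and completes the argument. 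Dispensing with that extra hypothesis while retaining only $K\ge0$ is the genuine difficulty, and the reason the conjecture remains open.
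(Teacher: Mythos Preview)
The statement you were asked to address is a \emph{conjecture}, not a theorem; the paper explicitly labels it as such and does not supply a proof. There is therefore no ``paper's own proof'' to compare against. Your write-up is not actually a proof proposal either: you describe the standard strategy (block decomposition of the curvature operator, the Weitzenb\"ock identity for $W^{\pm}$, the algebraic bound $36\det W^{\pm}\le 2\sqrt{6}\,|W^{\pm}|^3$, and the Gauss--Bonnet/signature input), and then you yourself identify the gap: the cubic reaction term $\det W^{\pm}$ is not controlled by $K\ge 0$ alone, so the integrated Weitzenb\"ock identity does not force $\nabla W^{\pm}=0$ without an additional pinching hypothesis. That diagnosis is exactly right and matches the paper's discussion; indeed, the partial results in the paper (Theorems~\ref{mainupper} and~\ref{maindiff}) succeed precisely by imposing extra conditions (an upper bound on $K$, or on $a_3-a_2$) that close this gap via Lemma~\ref{k3k1} and Proposition~\ref{propWpm}.

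So the honest summary is: you have correctly outlined the machinery and correctly located the obstruction, but you have not proved the conjecture, and neither has the paper. If you intended this as a proof, the genuine missing idea is a mechanism---beyond the pointwise algebraic inequality and the refined Kato inequality---to bound $\int_M \det W^{\pm}$ against $\int_M |W^{\pm}|^2$ using only $K\ge 0$. No such mechanism is currently known, which is why the statement remains a conjecture.
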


While the conjecture is still open, there have been various contributions from many mathematicians; see \cite{berger61, gl99, tachibana, yangdg00, costa04, brendle10einstein, cao14einstein} and the references therein. For convenience, one can normalize the metric such that $\Rc=g$. M. Berger first obtained classification under the condition that the sectional curvature is $1/4$-pinched \cite{berger61}. Recently, one interesting approach is to assume a positive lower bound on the sectional curvature, i.e., $K_{min}\geq \epsilon>0$;  this immediately implies that $K_{max}\leq 1-2\epsilon$, so the sectional curvature is $(\frac{\epsilon}{1-2\epsilon})$-pinched. That is, Berger's result implies classification for Einstein manifolds provided $K_{min}\geq \frac{1}{6}$. There have been various results in this direction. The lower bound has been improved from $\frac{1}{120}(\sqrt{1249}-23)$ by D. Yang \cite{yangdg00} to $\frac16 (2-\sqrt{2})$ by E. Costa \cite{costa04} to $\frac{1}{12}$ by the first author and P. Wu \cite{cao14einstein}. There are also classification results under related conditions such as $K\geq 0$ and positive intersection form \cite{gl99}, non-negative curvature operator \cite{tachibana}, nonnegative isotropic curvature \cite{brendle10einstein}, and 3-nonnegative curvature operator \cite{cao14einstein}. \\

As mentioned above, $K\geq 0$ implies $K\leq 1$. Thus, it is of great interests to consider upper bounds on the sectional curvature (say $K\leq 1$) instead of lower bounds. Costa \cite{costa04} observed that if $K\leq \frac{2}{3}$ then the Einstein manifold must be $(\mathbb{S}^4,g_0)$, $(\mathbb{CP}^2, g_{FS})$, or their quotients. Note that without assuming non-negativity of sectional curvature, an upper bound on $K$ gives, due to the algebraic structure, \textit{a priori} lower bound on $K$. For instance, $K\leq 1$ implies $K\geq -2$.  But a careful analysis of the differential structure would give a better bound. In particular, using \cite[Proposition 2.4(3)]{cao14einstein},  one can show that the condition  $K< 1$ is equivalent to the Riemannian curvature operator being 4-positive; hence, it follows that  \[K> \frac{1}{28}(7-\sqrt{105})\approx -.11596.\] 
That's our motivation to derive the following improvements.
\begin{theorem}
Let $(M,g)$ be a complete smooth 4-manifold such that $\Rc=g$. Assume the sectional curvature satisfies either one of the following conditions:
\begin{itemize}
\label{maineinstein}
\item[a.]$K\leq \frac{14-\sqrt{19}}{12}\approx .8034.$
\item[b.] At each point $q\in M$, for every orthonormal basis $\{e_i\}_{i=1}^4 \in T_qM$ which satisfies $K(e_1,e_2)\geq K(e_1,e_3)$, 
\[2K(e_1,e_2)+K(e_1,e_3)\geq \frac{\sqrt{19}-3}{4}\approx .3397247.\] 
\end{itemize}
Then $(M,g)$ is isometric to either $(S^4,g_{0})$, $(\mathbb{RP}^4, g_0)$, or $(\mathbb{CP}^2,\ g_{FS})$ up to rescaling.
\end{theorem}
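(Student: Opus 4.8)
The plan is to run the Bochner--Weitzenb\"ock machinery for the Weyl curvature of an Einstein four-manifold, using the pinching hypotheses to control the cubic reaction term. First, since $\lambda=1>0$, Myers' theorem guarantees that $M$ is compact, so maximum-principle and integration-by-parts arguments are both available. On an oriented Einstein four-manifold the curvature operator is block diagonal, $\mathcal R=(W^{+}+\tfrac{s}{12}\id)\oplus(W^{-}+\tfrac{s}{12}\id)$ with $s=4$, and the self-dual and anti-self-dual Weyl tensors satisfy the decoupled Weitzenb\"ock formula
\[ \tfrac12\Delta|W^{\pm}|^2 = |\nabla W^{\pm}|^2 + \tfrac{s}{2}|W^{\pm}|^2 - 18\det W^{\pm}, \]
where $W^{\pm}$ is viewed as a trace-free symmetric endomorphism of $\Lambda^{\pm}$ with eigenvalues $w^{\pm}_1\le w^{\pm}_2\le w^{\pm}_3$ summing to zero. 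Because conditions (a) and (b) are statements about sectional curvatures, they are orientation-insensitive, so any estimate obtained for $W^{+}$ holds verbatim for $W^{-}$, and the two factors may be treated symmetrically.

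The heart of the argument is an algebraic lemma: the pinching hypothesis forces the pointwise inequality $18\det W^{\pm}\le\tfrac{s}{2}|W^{\pm}|^2$. To prove it I would convert sectional-curvature data into eigenvalue data through the identity, valid in any orthonormal frame $\{e_i\}$,
\[ K(e_i,e_j)=\tfrac{s}{12}+\tfrac12\big(W^{+}_{aa}+W^{-}_{bb}\big), \]
which expresses each sectional curvature as $\tfrac{s}{12}$ plus half the sum of a diagonal entry of $W^{+}$ and one of $W^{-}$ in the induced bases of $\Lambda^{\pm}$. Diagonalizing $W^{+}$ (respectively $W^{-}$) and feeding the diagonal entries into hypothesis (a) or (b) bounds the spread of the $w^{\pm}_k$, after which $\det W^{\pm}=w^{\pm}_1w^{\pm}_2w^{\pm}_3$ is estimated against $|W^{\pm}|^2$ by a constrained optimization over the trace-free eigenvalue simplex subject to the pinching constraint. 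I expect the extremal configuration to be the one with a double eigenvalue, $(w_1,w_2,w_3)\propto(-1,-1,2)$, which is precisely where $\tfrac{14-\sqrt{19}}{12}$ and $\tfrac{\sqrt{19}-3}{4}$ emerge as the roots of the governing quadratic. This is the step I expect to be the main obstacle, since it demands a careful case-by-case Lagrange-multiplier analysis together with sharp bookkeeping of which sectional curvatures realize the extreme eigenvalues.

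Granting the lemma, the non-gradient part of the Weitzenb\"ock formula is nonnegative, so $|W^{\pm}|^2$ is subharmonic on the closed manifold $M$; hence $|W^{\pm}|^2$ is constant, $\nabla W^{\pm}\equiv0$, and equality $18\det W^{\pm}=\tfrac{s}{2}|W^{\pm}|^2$ holds identically. Should the pinching be too weak to yield the pointwise bound outright, I would instead integrate the formula and invoke the refined Kato inequality $|\nabla W^{\pm}|^2\ge\tfrac53|\nabla|W^{\pm}||^2$ to absorb the gradient term and recover the same conclusion with the sharp constant. In either case each of $W^{+}$ and $W^{-}$ is parallel and saturates the determinant estimate.

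Finally I would read off the geometry from the equality case. Saturation of $18\det W^{\pm}\le\sqrt6\,|W^{\pm}|^3$ forces a double eigenvalue, so a nonzero $W^{\pm}$ has eigenvalues $\propto(-1,-1,2)$ and its parallel distinguished eigenform is a parallel self-dual two-form, i.e.\ a K\"ahler form, making $(M,g)$ K\"ahler--Einstein with positive scalar curvature. If both $W^{+}$ and $W^{-}$ were nonzero one would land on $\mathbb S^2\times\mathbb S^2$, which the hypotheses are designed to exclude (its mixed planes have $K=0$, violating (b), and $K_{\max}=1>\tfrac{14-\sqrt{19}}{12}$, violating (a)); therefore at most one factor survives. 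When $W^{+}\equiv W^{-}\equiv0$ the metric has constant sectional curvature and positive Ricci, so $(M,g)$ is $(\mathbb S^4,g_0)$ or $(\mathbb{RP}^4,g_0)$; when exactly one factor is a nonzero K\"ahler term, self-dual K\"ahler--Einstein rigidity identifies $(M,g)$ with $(\mathbb{CP}^2,g_{FS})$. This exhausts the cases and yields the stated classification.
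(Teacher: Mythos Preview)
Your central algebraic lemma is false, and this is not a detail that can be patched: the hypothesis $K\le\beta=\tfrac{14-\sqrt{19}}{12}$ does \emph{not} force $18\det W^{+}\le\tfrac{s}{2}|W^{+}|^{2}$ pointwise. Take the Berger data $a_{1}=a_{2}=\tfrac{1-\beta}{2}$, $a_{3}=\beta$, $b_{1}=b_{2}=-\tfrac{1}{2}(\beta-\tfrac13)$, $b_{3}=\beta-\tfrac13$. All constraints of Proposition~\ref{berger} are satisfied and $K_{\max}=a_{3}=\beta$, yet $W^{-}=0$ while $W^{+}$ has eigenvalues $(-c,-c,2c)$ with $c=\beta-\tfrac13\approx 0.47$. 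Then $18\det W^{+}-2|W^{+}|^{2}=12c^{2}(3c-1)>0$ since $\beta>\tfrac23$. The same obstruction appears under hypothesis (b). So your constrained optimization on the trace-free simplex cannot produce the claimed bound, and the constants $\tfrac{14-\sqrt{19}}{12}$, $\tfrac{\sqrt{19}-3}{4}$ do not arise as roots of that optimization problem.

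What you are missing is a genuinely differential step that the paper performs \emph{before} any Weyl analysis: one applies the second Bianchi/Bochner identity $\Delta R_{ijkl}+Q(R)_{ijkl}=2R_{ijkl}$ to the \emph{full} Riemann tensor at the point $p$ where the minimum sectional curvature is attained. Nonnegativity of the Hessian of $K$ at $p$ yields the scalar inequality $a_{1}^{2}+b_{1}^{2}+2(a_{2}a_{3}+b_{2}b_{3})\le a_{1}$, and a careful case analysis of this inequality converts the \emph{upper} bound $a_{3}\le\beta$ (resp.\ $a_{3}-a_{2}\le\tfrac{7-\sqrt{19}}{4}$) into a global \emph{lower} bound $K\ge\beta_{1}$. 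The constant $\sqrt{19}$ is the discriminant of the quadratic in $a_{1}$ coming from this step, not from Weyl eigenvalue algebra. Only after one has the two-sided pinch $\beta_{1}\le K\le\beta$ does an algebraic estimate give $|W^{+}|+|W^{-}|\le\sqrt{3/2}$ everywhere; and even then the conclusion is not reached by subharmonicity or by ``integrate plus Kato'' alone, but by an argument that also feeds in the Lichnerowicz lower bound $\lambda_{1}\ge\tfrac{4}{3}$ via a Poincar\'e inequality applied to $|W^{+}|^{1/3}-t|W^{-}|^{1/3}$. Your fallback plan omits both of these ingredients.
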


\begin{remark} Condition (a) is an improvement of Costa's result. Condition (b) generalizes a result of \cite{yangdg00} which assumes a lower bound of $9/14\approx .642857$. As explained in Section \ref{classify}, this is essentially an upper bound on the difference between sectional curvatures. 
\end{remark}
Here is a sketch of the proof. The main idea is to make use of elliptic equations, which arise from Ricci flow computation, to study a static metric (see \cite{brendle10einstein, caotran1, cao14einstein, tran16weyl} for similar exploitation of this approach). In particular, Brendle first observed a Bochner formula for the Riemannian curvature on an Einstein manifold. Considering that equation at point $p$, which realizes the minimal sectional curvature, yields an inequality involving only zero order terms. Thus, either condition leads to a lower bound of $K$, which improves the \textit{a priori} bound. The rest of the proof is an adaptation of arguments in \cite{yangdg00, cao14einstein}: integrating a Bochner-Weitzenb\"ock identity of the Weyl tensor and applying the pinched condition imply the manifold is half-conformally flat. Then Hitchin's classification theorem of such manifolds completes our proof.  \\

The organization of the paper is as follows. The next section collects preliminaries including Berger's curvature decomposition, the inequality at point $p$ realizing the minimal sectional curvature,  and a classification by a technical condition on the Weyl tensor. Section \ref{sectionestimate} derives estimates from the algebraic structure of the curvature and inequalities resulted from the differential structure. Finally, the last section provides a proof of the main theorem. \\

{\bf Acknowledgment.} The authors would like to thank the anonymous referees for their valuable comments and suggestions to improve an earlier version of this paper. Cao's research was partially
supported by a grant from the Simons Foundation (\#280161).

\section{Preliminaries}
\subsection{\textbf{Curvature decomposition for four-manifolds}} In this subsection, we recall the curvature decomposition and several properties of an Einstein 4-manifold. 

First, on an oriented Riemannian manifold $(M,\ g)$, let $\RR, K, \Rc, \SS, \WW$ denote the Riemann curvature, sectional curvature, Ricci curvature, scalar curvature and Weyl curvature, respectively. Dimension four is special. The Hodge star operator induces a natural decomposition of the vector bundle of 2-forms,
$\wedge^2 TM$ 
\begin{equation*}
\wedge^2 TM =\wedge^+ M \oplus \wedge^- M,
\end{equation*}
where $\wedge^{\pm} M$ are the eigenspaces of eigenvalues $\pm 1$, respectively.
Elements of $\wedge^+ M$ and $\wedge^- M$ are called self-dual and
anti-self-dual 2-forms. 
Furthermore, it leads a decomposition for the curvature operator $\RR: \wedge^2 TM
\rightarrow \wedge^2 TM$:
\begin{equation*}
\RR =\left( \begin{array}{cc}
\frac{\SS}{12}\id+\WW^+ & \Rc-\frac{\SS}{4}\id \\
\Rc-\frac{\SS}{4}\id & \frac{\SS}{12}\id+\WW^-
\end{array} \right),
\end{equation*}
for $\WW^{\pm}$ the restriction of $\WW$ to $\wedge^{\pm}M$. Here $\EE=\Rc-\frac{\SS}{4}\id$ is the traceless Ricci part. 

It follows immediately that if the metric is Einstein then $\EE=0$ and  
\begin{equation}
\label{dual}
\RR =\left( \begin{array}{cc}
\frac{\SS}{12}\id+\WW^+ & 0 \\
0 & \frac{\SS}{12}\id+\WW^-
\end{array} \right):=\left( \begin{array}{cc}
\RR^+ & 0 \\
0 & \RR^-
\end{array} \right).
\end{equation}

The sectional curvature comes into play via Berger's decomposition \cite{berger61, st69}.

\begin{prop}
\label{berger}
Let $(M,\ g)$ be an Einstein four-manifold with $\Rc=\lambda g$. For
any $p\in M$, there exists an orthonormal basis $\{e_i\}_{1\leq i\leq
4}$ of $T_p M$, such that relative to the corresponding basis
$\{e_i\wedge e_j\}_{1\leq i<j\leq 4}$ of $\wedge^2 T_pM$,
$\RR$ takes the form
\begin{equation}
\label{abba}
\RR=\left( \begin{array}{cc}
A & B\\
B & A
\end{array}\right),
\end{equation}
where $A=\diag\{a_1,\ a_2,\ a_3\}$, $B=\diag \{b_1,\ b_2,\ b_3\}$.
Moreover, we have the followings:

\begin{enumerate}
\item  $a_1=K(e_1, e_2)=K(e_3, e_4)$ and $a_3=K(e_1,e_4)=K(e_2,e_3)$ realize the minimal and maximal sectional curvature at that point; 

\item $a_2=K(e_1, e_3)=K(e_2, e_4)$ and $a_1+a_2+a_3=\lambda$.

\item  $b_1=R_{1234},\ b_2=R_{1342},\ b_3=R_{1423}$.

\item  $|b_2-b_1|\leq a_2-a_1,\ |b_3-b_1|\leq a_3-a_1,\ |b_3-b_2|\leq
a_3-a_2$.
\end{enumerate}
\end{prop}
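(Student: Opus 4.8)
The plan is to build on the block decomposition (\ref{dual}), which is available precisely because the Einstein condition forces $\EE=0$: the curvature operator splits, as in (\ref{dual}), into two symmetric endomorphisms $\RR^{\pm}=\tfrac{\SS}{12}\id+\WW^{\pm}$ of the three-dimensional spaces $\wedge^{\pm}$. The entire statement will follow once these two blocks are put into diagonal form by a \emph{single} change of orthonormal frame. To arrange this I would invoke the surjective two-to-one homomorphism $\Phi\colon SO(4)\to SO(\wedge^+)\times SO(\wedge^-)\cong SO(3)\times SO(3)$ induced by the action of frame rotations on $2$-forms (equivalently the quaternionic description $SO(4)\cong(Sp(1)\times Sp(1))/\{\pm1\}$). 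By the spectral theorem choose $g^{\pm}\in SO(3)$ diagonalizing $\RR^{\pm}$, then pick $g\in SO(4)$ with $\Phi(g)=(g^+,g^-)$, and let $\{e_i\}$ be the image of the starting frame under $g$.

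In the oriented self-dual and anti-self-dual forms $\omega_k^{\pm}$ built from the three pairs $\{e_1\wedge e_2,\,e_3\wedge e_4\}$, $\{e_1\wedge e_3,\,e_4\wedge e_2\}$, $\{e_1\wedge e_4,\,e_2\wedge e_3\}$ (with $\omega_k^{\pm}$ the sum, resp.\ difference, of the two members of the $k$-th pair), both blocks are now diagonal, say $\RR^{\pm}\omega_k^{\pm}=\mu_k^{\pm}\omega_k^{\pm}$. Since each member of a pair equals $\tfrac12(\omega_k^++\omega_k^-)$ or $\tfrac12(\omega_k^+-\omega_k^-)$, applying $\RR$ shows that the three planes $\mathrm{span}\{e_1\wedge e_2,e_3\wedge e_4\}$, etc., are $\RR$-invariant, and that $\RR$ acts on the $k$-th of them by the symmetric matrix with equal diagonal entries $a_k=\tfrac12(\mu_k^++\mu_k^-)$ and off-diagonal entry $b_k=\tfrac12(\mu_k^+-\mu_k^-)$. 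Ordering the basis as $(e_1\wedge e_2,e_1\wedge e_3,e_1\wedge e_4\mid e_3\wedge e_4,e_4\wedge e_2,e_2\wedge e_3)$ then produces the asserted form (\ref{abba}). Reading off the entries gives (2) and the equalities among sectional curvatures in (1); the normalization is immediate from $a_1+a_2+a_3=K(e_1,e_2)+K(e_1,e_3)+K(e_1,e_4)=\Rc(e_1,e_1)=\lambda$.

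For the extremal characterizations in (1) and the inequalities in (3) I would use the remaining freedom to order the two spectra. A unit $2$-vector $\sigma$ is decomposable, hence a genuine tangent $2$-plane, exactly when $|\sigma^+|=|\sigma^-|=1/\sqrt2$; writing $\sigma^{\pm}=\tfrac1{\sqrt2}u^{\pm}$ with $u^{\pm}$ unit gives $K(\sigma)=\tfrac12(\langle\RR^+u^+,u^+\rangle+\langle\RR^-u^-,u^-\rangle)$, where $u^+$ and $u^-$ vary independently. Hence $\min K=\tfrac12(\mu^+_{\min}+\mu^-_{\min})$ and $\max K=\tfrac12(\mu^+_{\max}+\mu^-_{\max})$. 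Ordering $\mu_1^{\pm}\le\mu_2^{\pm}\le\mu_3^{\pm}$ and pairing them index by index makes $a_1=\min K$ and $a_3=\max K$, and (3) drops out: for $k>j$ set $x=\mu_k^+-\mu_j^+\ge0$ and $y=\mu_k^--\mu_j^-\ge0$, so that $a_k-a_j=(x+y)/2\ge|x-y|/2=|b_k-b_j|$.

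The one genuinely delicate point is the simultaneous diagonalization: I must know that a single orientation-preserving frame change can rotate $\wedge^+$ and $\wedge^-$ into prescribed eigenbases, with prescribed ordering, at the same time. This is where the surjectivity of $\Phi$ is essential, together with the elementary remark that an arbitrary permutation of an orthonormal eigenbasis can be realized inside $SO(3)$ after flipping signs of eigenvectors (which does not disturb the diagonalization). Granting this, everything else is bookkeeping in the $\omega_k^{\pm}$ basis.
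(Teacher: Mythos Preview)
The paper does not actually prove Proposition~\ref{berger}; it is quoted as a known result of Berger \cite{berger61} (see also Singer--Thorpe \cite{st69}), so there is no ``paper's own proof'' to compare against. Your argument is essentially the standard Singer--Thorpe proof and is correct: the key ingredients---surjectivity of $SO(4)\to SO(\wedge^+)\times SO(\wedge^-)$ to diagonalize $\RR^{\pm}$ simultaneously by a single frame rotation, the characterization of decomposable unit $2$-vectors as those with $|\sigma^+|=|\sigma^-|=1/\sqrt{2}$ (so that $u^+$ and $u^-$ range \emph{independently} over unit spheres, giving $\min K=\tfrac12(\mu^+_{\min}+\mu^-_{\min})$ and the analogous formula for $\max K$), and the pairing of ordered eigenvalues to deduce (3)---are all sound. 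You are also right to specify the basis ordering $(e_1\wedge e_2,\,e_1\wedge e_3,\,e_1\wedge e_4\mid e_3\wedge e_4,\,e_4\wedge e_2,\,e_2\wedge e_3)$ rather than the lexicographic $\{e_i\wedge e_j\}_{i<j}$ stated in the paper: in the latter ordering the matrix is block anti-diagonal rather than of the form (\ref{abba}), so the paper's phrasing is slightly imprecise and your choice is the one that actually yields the displayed block structure and matches item~(2).
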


One can easily observe  that diagonalization of (\ref{abba}) becomes (\ref{dual}). To be precise, orthonormal bases of $\wedge^{\pm} M$ are given by, for $e_{ij}:=e_i\wedge e_j$,
\begin{align*}
\sqrt{2}(\omega_1^+, \omega_2^+, \omega_3^{+}) &=(e_{12}+e_{34}, e_{13}-e_{24}, e_{14}+e_{23}),\\
\sqrt{2}(\omega_1^-, \omega_2^-, \omega_3^{-}) &=(e_{12}-e_{34}, e_{13}+e_{24}, e_{14}-e_{23}).
\end{align*}
 As a consequence, $\WW^{\pm}$ are given by 
\begin{equation}\label{eigenvalueW}
\begin{cases}
\WW^+(\omega^+_i,\omega^+_j) = [(a_i+b_i)-\frac{s}{12}]\delta_{ij}, &\\
\WW^-(\omega^-_i,\omega^-_j) = [(a_i-b_i)-\frac{s}{12}]\delta_{ij}, &
\end{cases}
\end{equation}
and eigenvalues of the curvature operator are ordered,
\begin{equation}
\label{eigenvalue}
\begin{cases}
a_1+b_1 \leq a_2+b_2 \leq a_3+b_3, &\\
a_1-b_1 \leq a_2-b_2 \leq a_3-b_3. &
\end{cases}
\end{equation}

Next, we collect important properties of a closed Einstein 4-manifold. First, the curvature and topology of such a closed 4-manifold are connected via the Gauss-Bonnet-Chern formula of the Euler characteristic $\chi$ and Hirzebruch formula of the topology signature $\tau$ (cf. \cite{Besse} for more details):
\begin{align}
\label{Euler}
8\pi^2 \chi(M) &= \int_{M}(|\WW|^2-\frac{1}{2}|\EE|^2+\frac{S^2}{24}) dv;\\
\label{signature}
12 \pi^2 \tau(M) &=\int_{M}(|\WW^{+}|^2-|\WW^{-}|^2)dv. 
\end{align} 
If the metric is Einstein then $|\EE|=0$. Thus, a direct consequence of  (\ref{Euler}) and (\ref{signature}) is the the Hitchin-Thorpe inequality:
\begin{equation*}
|\tau(M)|\leq \frac{2}{3}\chi(M).
\end{equation*}

The duality decomposition (\ref{dual}) also implies that, $\RR,\RR^{\pm},\WW,\WW^{\pm}$ of an Einstein 4-manifold are all harmonic.
Using the harmonicity of $\WW^{\pm}$, A. Derdzi\'nski \cite{Derd83}
derived the following Weitzenb\"ock formula (also see  A. Besse
\cite[Prop. 16.73]{Besse}):
\begin{equation}
\label{weitzenbock}
\Delta|W^{\pm}|^2 = 2|\nabla W^{\pm}|^2
+\SS |W^{\pm}|^2 -36\det W^{\pm}.
\end{equation}
Furthermore, it was observed by Gursky-LeBrun \cite{gl99} and Yang
\cite{yangdg00} that $\WW^{\pm}$ satisfies the following  refined Kato inequality (proven to be
optimal by \cite{branson00, CGH00weight}),

\begin{equation}
\label{Kato}
|\nabla W^{\pm}|^2\geq \frac{5}{3}|\nabla|W^{\pm}||^2.
\end{equation}

In the following, we normalize the metric such that $\Rc=g$. The following elliptic equation is well-known, see \cite{H3} or
\cite{brendle10einstein},
\begin{equation*} \Delta
R(e_i,e_j,e_k,e_l) +2(B_{ijkl}-B_{ijlk}+B_{ikjl}-B_{iljk})
=2R_{ijkl},
\end{equation*}
where $B_{ijkl}=g^{mn}g^{pq}R_{imjp}R_{knlq}$. Berger's
curvature decomposition yields explicitly that
\begin{equation*}
\Delta R(e_1,e_2,e_1,e_2) +2(a_1^2+b_1^2+2a_2a_3+2b_2b_3) = 2a_1.
\end{equation*}

Let \textbf{$p$} be the point that realizes the minimum of the sectional curvature of $(M^4,g)$ by the tangent plane spanned by $\{e_1,e_2\}\subset T_p M$. For
any $v\in T_p M$ and geodesic $\gamma(t)$ with $\gamma(0)=p,\
\gamma'(0)=v$, let $\{e_1,\ e_2,\ e_3,\ e_4\}$ be a parallel
orthornormal frame along $\gamma(t)$, then we have
\begin{equation*}
\begin{split}
(D^2_{v,v}R)(e_1,e_2,e_1,e_2)(p)=D^2_{v,v}(R(e_1,e_2,e_1,e_2))(p)\geq0.
\end{split}
\end{equation*}
Hence it follows that $(\Delta R)(e_1,e_2,e_1,e_2)(p) \geq 0$. Thus, at $p$, the following holds
\begin{equation}
\begin{split}
a_1^2+b_1^2+2(a_2a_3+b_2b_3)&\leq a_1.
\end{split} \label{Hamilton}
\end{equation}
\subsection{\textbf{Examples}}
Here we state the curvature decomposition for some well-known Einstein 4-manifolds. The curvature of $\mathbb{S}^4$ ($\chi=2,\tau=0$) with standard metric $g_0$ is:
\begin{equation}
\RR=\left( \begin{array}{cc}
\frac{\SS}{12}\id &  \\
   & \frac{\SS}{12}\id
     \end{array} \right)
\end{equation}
$(\mathbb{RP}^4, g_0)$ is the quotient of $(\mathbb{S}^4,g_0)$ by the antipodal identification. The curvature of $\mathbb{CP}^2$ ($\chi=3,\tau=1$) with Fubini-Study metric $g_{FS}$ is:
\begin{equation}
\RR=\left( \begin{array}{cc}
\diag \{0,\ 0, \ \frac{\SS}{4}\}&  \\
   & \frac{\SS}{12}\id
     \end{array} \right).
\end{equation}
The self-dual part of Weyl tensor $\WW^+=\diag\{-\frac{\SS}{12}, \ -\frac{\SS}{12}, \ \frac{\SS}{6}\}$ and anti-self-dual part $\WW^{-}=0$.
 
The curvature of $\mathbb{S}^2\times \mathbb{S}^2$ ($\chi=4,\tau=0$) with the product metric is 
\begin{equation}
\RR=\left( \begin{array}{cc}
A &  0\\
0  & A
     \end{array} \right)
\end{equation} 
for $A=\diag\{0,\ 0,\ \frac{\SS}{4}\}$. The self-dual part and anti-self-dual part of the Weyl tensor are
$\WW^{\pm}=\diag\{-\frac{\SS}{12},\ -\frac{\SS}{12},\ \frac{\SS}{6}\}$. \\
\subsection{\textbf{Classification by estimates on $\WW^\pm$.}}
In this subsection, we collect a classification by a technical assumption. The result is implicit in \cite{cao14einstein} and a proof is provided for completeness.

\begin{prop}
\label{propWpm}
Let $(M, g)$ be an Einstein four-manifold $Rc=g$
such that,
\[|\WW^+|+|\WW^-|\leq \sqrt{\frac{3}{2}}\]
Then it is isometric to either $(S^4,g_{0})$, $(\mathbb{RP}^4, g_0)$, or $(\mathbb{CP}^2,\ g_{FS})$ up to rescaling. 
\end{prop}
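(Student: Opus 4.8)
The plan is to show that the hypothesis forces the manifold to be half-conformally flat, i.e. $\WW^+\equiv 0$ or $\WW^-\equiv 0$; once this is established, Hitchin's classification of (anti-)self-dual Einstein four-manifolds of positive scalar curvature yields exactly $(S^4,g_0)$, $(\mathbb{RP}^4,g_0)$ or $(\mathbb{CP}^2,g_{FS})$ up to rescaling. Note that $\mathbb{S}^2\times\mathbb{S}^2$ is \emph{not} half-conformally flat, which is why it is absent from the list. Since $\Rc=g$ we have $\SS=4$. The engine is the Weitzenb\"ock formula (\ref{weitzenbock}) together with the refined Kato inequality (\ref{Kato}) and the sharp algebraic bound $\det \WW^\pm\le \frac{1}{3\sqrt6}|\WW^\pm|^3$, valid for any trace-free symmetric endomorphism of the three-dimensional space $\wedge^\pm M$, with equality exactly when its eigenvalues are proportional to $(-1,-1,2)$. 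Writing $u^\pm=|\WW^\pm|$ and using $\tfrac12\Delta (u^\pm)^2=u^\pm\Delta u^\pm+|\nabla u^\pm|^2$, these three ingredients combine to give, at every point where $u^\pm>0$,
\begin{equation*}
\Delta u^\pm \ge 2u^\pm-\sqrt6\,(u^\pm)^2 .
\end{equation*}

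The quadratic on the right vanishes at $u^\pm=\sqrt{2/3}=2/\sqrt6$, which is precisely the constant value of $|\WW^\pm|$ on $\mathbb{CP}^2$ and on each factor of $\mathbb{S}^2\times\mathbb{S}^2$. Applying the maximum principle at an interior maximum of $u^\pm$ shows that if $\WW^\pm\not\equiv0$ then $\max u^\pm\ge \sqrt{2/3}$. The role of the threshold $\sqrt{3/2}=3/\sqrt6$ is then transparent: it lies strictly between $\sqrt{2/3}$ and $2\sqrt{2/3}=\sqrt{8/3}$, the latter being the value of $|\WW^+|+|\WW^-|$ on $\mathbb{S}^2\times\mathbb{S}^2$, so the hypothesis excludes that product while still admitting $\mathbb{CP}^2$. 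Thus the entire content of the proposition reduces to the implication: if $u^++u^-\le\sqrt{3/2}$ everywhere, then $u^+\equiv0$ or $u^-\equiv0$.

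The main obstacle is exactly this last implication. The maximum principle alone does not suffice, because the suprema of $u^+$ and $u^-$ may be attained at different points, and one checks that the naive integral estimates obtained by integrating the reaction inequality (using $\int_M\Delta u^\pm=0$, the nonnegativity of the singular part of $\Delta u^\pm$ along $\{u^\pm=0\}$, and the pointwise bound $\sqrt6\,u^\pm\le 3-\sqrt6\,u^\mp$ to dominate the cubic term) remain consistent with two spatially separated bumps of $\WW^+$ and $\WW^-$. To rule this out I would integrate the sharper inequality $\tfrac53\int|\nabla u^\pm|^2+2\int(u^\pm)^2\le\sqrt6\int(u^\pm)^3$, which retains the full Kato gain, against the Sobolev (Yamabe) inequality of the positive Einstein metric, whose Yamabe constant equals $4\,\mathrm{Vol}(g)^{1/2}$; this is the gap mechanism of Gursky--LeBrun \cite{gl99} and Yang \cite{yangdg00}, and the numerology is arranged so that the constraint $u^++u^-\le\sqrt{3/2}$ forces $\int_M(u^+)^2=0$ or $\int_M(u^-)^2=0$. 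Equality analysis in the determinant bound together with the strong maximum principle then pins down the surviving factor, and Hitchin's theorem completes the classification.
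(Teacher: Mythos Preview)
Your setup is correct and well organized: from the Weitzenb\"ock formula, the refined Kato inequality, and the sharp determinant bound you legitimately obtain
\[
\tfrac{5}{3}\int_M |\nabla u^\pm|^2 + 2\int_M (u^\pm)^2 \le \sqrt{6}\int_M (u^\pm)^3,
\]
and you correctly identify that the real difficulty is coupling the two chiralities. But at exactly that point your argument stops being a proof and becomes a hope: you assert that ``the numerology is arranged'' so that the Gursky--LeBrun/Yang Yamabe--Sobolev mechanism forces one of $u^+$, $u^-$ to vanish, without carrying out the computation. A quick check shows this does not close as stated. The Gursky--LeBrun gap gives $\int (u^\pm)^2\ge \tfrac{2}{3}\mathrm{Vol}$ when $W^\pm\not\equiv 0$, hence $\int((u^+)^2+(u^-)^2)\ge \tfrac{4}{3}\mathrm{Vol}$; but the hypothesis $u^++u^-\le\sqrt{3/2}$ only yields $\int((u^+)^2+(u^-)^2)\le \tfrac{3}{2}\mathrm{Vol}$, which is perfectly compatible. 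Likewise, adding your two integral inequalities and using $(u^+)^3+(u^-)^3=(u^++u^-)((u^+)^2-u^+u^-+(u^-)^2)\le\sqrt{3/2}\,((u^+)^2+(u^-)^2)$ only produces $\tfrac{5}{3}\int(|\nabla u^+|^2+|\nabla u^-|^2)\le \int((u^+)^2+(u^-)^2)$, again no contradiction. So the decoupled Sobolev/gap estimates you invoke do not, by themselves, exploit the \emph{pointwise} constraint $u^++u^-\le\sqrt{3/2}$ sharply enough to rule out two disjoint bumps.

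The paper closes the gap by a genuinely different device. Instead of Sobolev, it uses the Lichnerowicz--Poincar\'e inequality $\lambda_1\ge \tfrac{4}{3}$, applied to the \emph{coupled} test function
\[
\phi_\epsilon = (|W^+|^2+\epsilon)^{1/6} - t\,(|W^-|^2+\epsilon)^{1/6},
\]
with $t>0$ chosen so that $\int_M\phi_\epsilon=0$. One applies the Weitzenb\"ock--Kato machinery to the power $(|W^\pm|^2+\epsilon)^{\alpha}$ with the optimal exponent $\alpha=\tfrac{1}{3}$ (which maximizes $\tfrac{1}{\alpha}(2-\tfrac{1}{3\alpha})$), integrates, and arrives at a pointwise quadratic in $t$ with positive leading coefficient and discriminant
\[
D \;\propto\; -48 + 16\sqrt{6}\,(|W^+|+|W^-|) - 24\,|W^+||W^-|.
\]
Under the hypothesis $|W^+|+|W^-|\le\sqrt{3/2}$ one has $16\sqrt{6}(|W^+|+|W^-|)\le 48$, hence $D\le 0$; so the integrand is nonnegative, the integral is nonpositive, and equality forces $|W^+||W^-|\equiv 0$, whence (by analyticity) half-conformal flatness. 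The key idea you are missing is this coupling of $W^+$ and $W^-$ through a single mean-zero test function fed into Poincar\'e; the discriminant computation is where the exact constant $\sqrt{3/2}$ emerges.
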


\begin{proof}
If the manifold is half-conformally flat, then, by Hitchin's classification \cite[Theorem 13.30]{Besse}, the result follows. If not, we'll derive a contradiction. First, for some $\alpha>0$ to be determined later, and any $\epsilon>0$, there exists
$t=t(\alpha,\epsilon)\in \mathbb{R}^{+}$, such that
\begin{equation*}
\begin{cases} \int_M
(|W^+|^2+\epsilon)^{\frac{\alpha}{2}}-t(|W^-|^2+\epsilon)^{\frac{\alpha}{2}}
dv=0,\\
t(\alpha,\epsilon)\rightarrow t(\alpha, 0) \text{~ as~} \epsilon\rightarrow 0
\end{cases}
\end{equation*}

Applying the Weitzenb\"ock formula (\ref{weitzenbock}) and the refined Kato inequality (\ref{Kato}) yields,
\begin{align*}
& \Delta[(|W^+|^2+\epsilon)^\alpha+t^2(|W^-|^2+\epsilon)^\alpha]\\
=& \alpha(|W^+|^2+\epsilon)^{\alpha-2}[(|W^+|^2+\epsilon)(2|\nabla W^+|^2+\SS|W^+|^2-36\det W^+)+(\alpha-1)|\nabla|W^+|^2|^2]\nonumber \\
+& t^2\alpha(|W^-|^2+\epsilon)^{\alpha-2}[(|W^-|^2+\epsilon)(2|\nabla W^-|^2+\SS|W^-|^2-36\det W^-)+(\alpha-1)|\nabla|W^-|^2|^2] \nonumber\\
\geq& [(4-\frac{2}{3\alpha})|\nabla(|W^+|^2+\epsilon)^{\frac{\alpha}{2}}|^2+\alpha(|W^+|^2+\epsilon)^{\alpha-1}(\SS|W^+|^2-36\det W^+)]\nonumber \\
+&
t^2[(4-\frac{2}{3\alpha})|\nabla(|W^-|^2+\epsilon)^{\frac{\alpha}{2}}|^2+\alpha(|W^-|^2+\epsilon)^{\alpha-1}(\SS|W^-|^2-36\det W^-)]. (*)
\end{align*}
The choice of $t(\alpha, \epsilon)$ enables us to use the Poincar\'e inequality
\begin{equation*}
\begin{split}
&(4-\frac{2}{3\alpha})\int_M
(|\nabla(|W^+|^2+\epsilon)^{\frac{\alpha}{2}}|^2+t^2|\nabla(|W^-|^2+\epsilon)^{\frac{\alpha}{2}}|^2)
dv\\
&\geq (2-\frac{1}{3\alpha})\int_M|\nabla[(|W^+|^2+\epsilon)^{\frac{\alpha}{2}}-t(|W^-|^2+\epsilon)^{\frac{\alpha}{2}}]|^2 dv\\
&\geq (2-\frac{1}{3\alpha})\lambda_1 \int_M
[(|W^+|^2+\epsilon)^{\frac{\alpha}{2}}-t(|W^-|^2+\epsilon)^{\frac{\alpha}{2}}]^2dv,
\end{split}
\end{equation*}
where $\lambda_1$ is the lowest positive eigenvalue of the Laplace
operator. In our case that $\text{Ric}=g$,  we have $\lambda_1 \geq
\frac{4}{3}$ (see, for example, \cite{Lich58}).
Picking $\alpha=\frac{1}{3}$ which maximizes the value of 
$\frac{1}{\alpha}(2-\frac{1}{3\alpha})$, substituting $\SS=4$, and integrating the
inequality (*) yield
\begin{equation*}
\begin{split}
0\geq \frac13 \int_M&
\Big(4\Big[(|W^+|^2+\epsilon)^{\frac{1}{6}}-t(|W^-|^2+\epsilon)^{\frac{1}{6}}\Big]^2+t^2(|W^-|^2+\epsilon)^{-2/3}(4|W^-|^2-36\det
W^-)\\
&+(|W^+|^2+\epsilon)^{-2/3}(4|W^+|^2-36\det W^+)\Big)dv.
\end{split}
\end{equation*}
Using the algebraic inequality $36\text{det}(W^{\pm})\leq 2\sqrt{6}|W^{\pm}|^3$ and letting $\epsilon\rightarrow 0$, we get
\begin{align*}
0\geq & \int_M \Big(t^2|W^-|^{-4/3}(4 |W^-|^2-36\det
W^-)+4(|W^+|^{1/3}-t|W^-|^{1/3})^2\\
&+|W^+|^{-4/3}(4|W^+|^2-36\det W^+)\Big)dv,\\
\geq & \int_M \Big(t^2|W^-|^{2/3}(4-2\sqrt{6}|W^-|)+4(|W^+|^{1/3}-t|W^-|^{1/3})^2\\
&+|W^+|^{2/3}(4-2\sqrt{6}|W^+|)\Big)dv\\
\geq  & \int_M \Big(t^2 |W^-|^{2/3}(8-2\sqrt{6}|W^-|)-8t|W^+|^{1/3}|W^-|^{1/3}+|W^+|^{2/3}(8-2\sqrt{6}|W^+|)\Big)dv.
\end{align*}

The integrand is a quadratic function of $t$, with positive leading
coefficient and discriminant
\begin{align*}
D=& |W^+|^{2/3}|W^-|^{2/3}\Big[16-64+16\sqrt{6}(|W^+|+|W^-|)-24|W^+||W^-|\Big]\\
=& 16|W^+|^{2/3}|W^-|^{2/3}\Big[\sqrt{6}(|W^+|+|W^-|)-3-\frac{3}{2}|W^+||W^-|\Big].
\end{align*}
By our assumption, $D\leq 0$. So equality must happen at each point and so either $|\WW^+|^2$ or $|\WW^-|^2$ must vanish at each point. As  both are analytic functions, one must be vanishing everywhere. So the manifold is half-conformally flat, a contradiction. 
\end{proof}
\section{Estimates}
\label{sectionestimate}

In this section, we derive estimates from the algebraic structure of $\WW^{\pm}$ and inequality (\ref{Hamilton}). 
\subsection{Algebraic Estimates}
The technical lemma below estimates the norm of $\WW^{\pm}$ by pinching of sectional curvatures. 
\begin{lemma}
\label{k3k1}
Using the Berger's decomposition Prop. \ref{berger}, suppose
\begin{align*}
a_3-a_2 &= \frac{\delta}{2}\geq 0,\\
a_3+a_2 &= \alpha>0.
\end{align*}
Then 
\begin{align*} \delta &\leq 6\alpha-4;\\
|W^{+}|+|W^{-}| &\leq \frac{6\alpha-4+\delta}{\sqrt{6}},\\
|W^{+}|^2+|W^{-}|^2 &\leq \frac{1}{2}(12\alpha^2-16\alpha+\frac{16}{3}+\delta^2)\\
 &= 8(a_3^2-(1-a_1)(1-a_2)+\frac{1}{3}).
\end{align*}
\end{lemma}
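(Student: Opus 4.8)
The plan is to reduce everything to the Berger data $a_1\le a_2\le a_3$ together with $b_1,b_2,b_3$, and to exploit two linear constraints that make the whole lemma purely algebraic. First I would record these constraints: the normalization $\Rc=g$ forces $a_1+a_2+a_3=1$ (and $\SS=4$, so $\SS/12=\tfrac13$), while the first Bianchi identity gives $b_1+b_2+b_3=0$. By (\ref{eigenvalueW}) the eigenvalues of $\WW^+$ are $a_i+b_i-\tfrac13$ and those of $\WW^-$ are $a_i-b_i-\tfrac13$, and both operators are traceless on the $3$-dimensional bundles $\wedge^\pm M$. I would also pass freely between the pairs $(\alpha,\delta)$ and $(a_1,a_2,a_3)$ via $a_{2,3}=\tfrac{\alpha}{2}\mp\tfrac{\delta}{4}$ and $a_1=1-\alpha$.

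For the first claim $\delta\le 6\alpha-4$, substituting $\alpha=a_2+a_3$, $\delta=2(a_3-a_2)$ and $a_1=1-\alpha$ shows it is equivalent to $a_3+2a_2\ge 1=a_1+a_2+a_3$, i.e. to $a_2\ge a_1$, which is part of the Berger ordering in Prop.\ \ref{berger}(1); so this step is immediate. For the third inequality and the displayed equality, the key observation is that $\sum_i b_i=0$ (and likewise $\sum_i(a_i-\tfrac13)=0$) gives
\[
\sum_i b_i^2=\tfrac13\sum_{i<j}(b_i-b_j)^2 .
\]
Applying the pinching bounds $|b_i-b_j|\le a_j-a_i$ from Prop.\ \ref{berger}(3) termwise yields $\sum_i b_i^2\le \tfrac13\sum_{i<j}(a_i-a_j)^2=\sum_i a_i^2-\tfrac13$, the last step using $\sum_i a_i=1$. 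Since a direct expansion gives $|\WW^+|^2+|\WW^-|^2=2\sum_i(a_i-\tfrac13)^2+2\sum_i b_i^2$ and $\sum_i(a_i-\tfrac13)^2=\sum_i a_i^2-\tfrac13$, I obtain $|\WW^+|^2+|\WW^-|^2\le 4\big(\sum_i a_i^2-\tfrac13\big)$; rewriting $\sum_i a_i^2$ in terms of $\alpha,\delta$ produces the stated quadratic, and eliminating $\alpha,\delta$ in favor of the $a_i$ gives the final closed form. Both are routine algebra.

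The second inequality is where the real work lies. I would first isolate the elementary fact that a traceless symmetric operator $T$ on a $3$-dimensional space with eigenvalues $w_1\le w_2\le w_3$ satisfies $|T|\le\sqrt{2/3}\,(w_3-w_1)$, with equality when two eigenvalues coincide; this follows from $|T|^2=\tfrac12\big((w_3-w_1)^2+3(w_1+w_3)^2\big)$ together with the constraint $|w_1+w_3|\le\tfrac13(w_3-w_1)$ forced by $w_1\le -(w_1+w_3)\le w_3$. Applying this to $\WW^\pm$, whose eigenvalues are increasingly ordered by (\ref{eigenvalue}), the relevant spreads are $s^{+}=(a_3-a_1)+(b_3-b_1)$ and $s^{-}=(a_3-a_1)-(b_3-b_1)$, both nonnegative because $|b_3-b_1|\le a_3-a_1$. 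Hence $|\WW^+|+|\WW^-|\le\sqrt{2/3}\,(s^{+}+s^{-})=\sqrt{2/3}\cdot 2(a_3-a_1)$.

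The last ingredient is the algebraic identity $6\alpha-4+\delta=4(a_3-a_1)$, after which $\sqrt{2/3}\cdot 2(a_3-a_1)=\tfrac{6\alpha-4+\delta}{\sqrt6}$, completing the claim. I expect the scalar spread bound $|T|\le\sqrt{2/3}\,(w_3-w_1)$ and the bookkeeping that makes $s^{+}+s^{-}$ telescope to $2(a_3-a_1)$ to be the only nontrivial steps; the remaining manipulations are substitutions using $\sum a_i=1$ and $\sum b_i=0$.
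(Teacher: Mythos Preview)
Your proof is correct and, for the second estimate, takes a genuinely different and cleaner route than the paper. The paper parametrizes the eigenvalues of $\WW^\pm$ by $p=\lambda_3+\lambda_2$, $q=\lambda_3-\lambda_2$, $m=\mu_3+\mu_2$, $n=\mu_3-\mu_2$, sets up the constrained maximization of $\sqrt{3p^2+q^2}+\sqrt{3m^2+n^2}$ subject to $p+m=2(\alpha-\tfrac23)$, $q+n=\delta$, $0\le q\le 3p$, $0\le n\le 3m$, and then checks five boundary/critical-point cases individually to show the maximum is $3\alpha_1+\delta$. You bypass this optimization entirely by isolating the sharp scalar fact that a traceless $3\times 3$ symmetric operator satisfies $|T|\le\sqrt{2/3}\,(w_3-w_1)$, applying it separately to $\WW^+$ and $\WW^-$, and observing that the spreads $s^\pm=(a_3-a_1)\pm(b_3-b_1)$ add up to $2(a_3-a_1)$ so that the $b$-contribution cancels; the identity $6\alpha-4+\delta=4(a_3-a_1)$ then closes the argument in one line. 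Your treatment of the third estimate, via $\sum_i b_i^2=\tfrac13\sum_{i<j}(b_i-b_j)^2\le\tfrac13\sum_{i<j}(a_i-a_j)^2$, is likewise direct; the paper does not spell this case out, only asserting that ``the second one follows from the same method.'' What the paper's approach buys is an explicit view of where the maximum is attained in the full four-parameter region, which could in principle be reused for other functionals of the eigenvalues; what your approach buys is brevity and transparency, since it reduces everything to the single inequality $|T|\le\sqrt{2/3}\,\mathrm{spread}(T)$ and the Berger pinching $|b_3-b_1|\le a_3-a_1$.
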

\begin{proof} We'll prove the first estimate while the second one follows from the same method. Let $\{\lambda_i, \mu_i\}_{i=1}^3$ be eigenvalues of $\WW^\pm$. By (\ref{eigenvalueW}) the assumptions above translate to
\begin{align*}
\lambda_3+\lambda_2+\mu_3+\mu_2 =\alpha_1=2(\alpha-\frac{2}{3}),\\
\lambda_3-\lambda_2+\mu_3-\mu_2 &= \delta,\\
-\frac{\lambda_3}{2}\leq \lambda_2\leq \lambda_3,\\
-\frac{\mu_3}{2}\leq \mu_2\leq \mu_3.
\end{align*}
Then
\begin{align*} 
|W^{+}|+|W^{-}|&= \sqrt{2}(\sqrt{\lambda_3^2+\lambda_2^2+\lambda_3\lambda_2}+\sqrt{\mu_3^2+\mu_2^2+\mu_3\mu_2},\\
&=\sqrt{2}(\sqrt{\frac{3}{4}(\lambda_3+\lambda_2)^2+\frac{1}{4}((\lambda_3-\lambda_2)^2}+\sqrt{\frac{3}{4}(\mu_3+\mu_2)^2+\frac{1}{4}((\mu_3-\mu_2)^2}).
\end{align*}
So we consider the following problem. 
For,
\begin{align*}
\lambda_3+\lambda_2 &=p,\\
\lambda_3-\lambda_2 &=q,\\
\mu_3+\mu_2 &=m,\\
\mu_3-\mu_2 &=n.
\end{align*}
The constraints become:
\begin{align*}
p+m &= \alpha_1,\\
q+n &= \delta,\\
0 &\leq q\leq 3p,\\
0 &\leq n\leq 3m.
\end{align*} 
The goal is to maximize 
\[f(p,q,m,n)=\sqrt{3p^2+q^2}+\sqrt{3m^2+n^2}.\]
First, since the constraint is a closed set, the maximum exists. 



 
Next, we consider the problem with 2 variables $m, n$. The constraints become, 
\begin{align*}
0 &\leq 3m-n=\ell \leq 3\alpha_1-\delta,\\
0 &\leq n\leq \delta.
\end{align*}
The function to maximize is
\begin{align*}
f(m,n) &= \sqrt{3(\alpha_1-m)^2+(\delta-n)^2}+\sqrt{3m^2+n^2},\\
\sqrt{3}f(m,n) &=\sqrt{(3\alpha_1-\ell-n)^2+3(\delta-n)^2}+\sqrt{(\ell+n)^2+3n^2},\\
 &=g(\ell,n).
\end{align*}
We consider the following cases.

\textbf{Case 1:} $n=0$ then 
\begin{align*}
g(\ell,0) 
&=\sqrt{(\ell-3\alpha_1)^2+3\delta^2}+\ell.
\end{align*}
Since the function $f(x)=\sqrt{x^2+a^2}+x$, $a>0$, is strictly increasing, 
\[g(\ell,0)\leq 3\alpha_1+\delta.\]

\textbf{Case 2:} $\ell=0$ then 
\begin{align*}
g(n,0) &= \sqrt{4n^2-6(\alpha_1+\delta)n+ 9\alpha_1^2+3\delta^2}+2n,\\
&=\sqrt{(2n-\frac{3}{2}(\alpha_1+\delta))^2+(\frac{3\sqrt{3}}{2}\alpha_1-\frac{\sqrt{3}}{2}\delta)^2}+2n\\
& \leq 3\alpha_1+\delta. 
\end{align*}

\textbf{Case 3:} $n=\delta$ then 
\begin{align*}
g(\ell,\delta) 
&=(3\alpha_1-\delta)-\ell+\sqrt{(\ell+\delta)^2+3\delta^2},\\
&\leq 3\alpha_1+\delta.
\end{align*}

\textbf{Case 4:} $\ell=3\alpha_1-\delta$ then 
\begin{align*}
g(3\alpha_1-\delta,n) &= 2(\delta-n)+\sqrt{(2n+\frac{1}{2}(3\alpha_1-\delta))^2+\frac{3}{4}(3\alpha_1-\delta)^2}\\
&\leq 3\alpha_1+\delta.
\end{align*}

\textbf{Case 5:} At a critical point,
\begin{align*}
0=\partial_\ell g &=-\frac{3\alpha_1-\ell-n}{\sqrt{(3\alpha_1-\ell-n)^2+3(\delta-n)^2}}+\frac{\ell+n}{\sqrt{(\ell+n)^2+3n^2}},\\
0=\partial_n g &=-\frac{3\alpha_1+3\delta-\ell-4n}{\sqrt{(3\alpha_1-\ell-n)^2+3(\delta-n)^2}}+\frac{4n+\ell}{\sqrt{(\ell+n)^2+3n^2}}
\end{align*}
Therefore, at that point,
\begin{align*}
\frac{\ell+n}{(3\alpha_1-\delta-\ell)+(\delta-n)} &=\frac{4n+\ell}{(3\alpha_1-\delta-\ell)+4(\delta-n)}\\
&=\frac{n}{\delta-n}=\frac{\ell}{3\alpha_1-\delta-\ell}=\frac{1}{x}.
\end{align*}
Then, 
\begin{align*}
g(\ell,n)&=(x+1)\sqrt{(\frac{3\alpha_1-\delta}{x+1}+\frac{\delta}{x+1})^2+(\frac{\delta}{x+1})^2}\\
&=\sqrt{9\alpha_1^2+\delta^2}\leq 3\alpha_1+\delta.
\end{align*}
\end{proof}

\begin{remark} The first estimate generalizes and unifies \cite[Lemma 4.1]{yangdg00} which treats the case $\alpha\leq 1-\epsilon$, $\delta\leq 2(1-3\epsilon)$ in part (a.) and $\alpha\leq 1$, $\delta \leq 1-6\epsilon$ in part (b.).   
\end{remark}
\begin{remark}
The condition that $6\alpha+\delta-4\leq 3$ implies $3$-non-negative curvature.
\end{remark}
\begin{coro}
\label{eulerestimate}Let $(M, g)$ be an Einstein four-manifold $\Rc=g$ 
such that at each point, $\alpha\leq a_1\leq a_3\leq \beta$, then 
\[8\pi^2 \chi(M) \leq \Big(8(\beta^2-(1-\alpha)(\alpha+\beta)+\frac{10}{3}\Big)\text{Vol}(M).\]
\end{coro}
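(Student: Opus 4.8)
The plan is to feed the pointwise estimate of Lemma \ref{k3k1} into the Gauss--Bonnet--Chern formula (\ref{Euler}). First I would record the two simplifications forced by the hypothesis $\Rc=g$: the traceless Ricci part vanishes, $\EE=0$, and the scalar curvature is $\SS=4$, so that $\SS^2/24=2/3$. Thus (\ref{Euler}) reduces to
\[
8\pi^2\chi(M)=\int_M\Big(|\WW^+|^2+|\WW^-|^2+\tfrac{2}{3}\Big)\,dv .
\]
Next I would apply the last line of Lemma \ref{k3k1}, namely $|\WW^+|^2+|\WW^-|^2\le 8\big(a_3^2-(1-a_1)(1-a_2)+\tfrac13\big)$, which holds at every point since there $a_1\le a_2\le a_3$ and $a_1+a_2+a_3=1$. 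Substituting and collecting constants ($8\cdot\tfrac13+\tfrac23=\tfrac{10}{3}$), the entire statement reduces to the \emph{pointwise} inequality
\[
a_3^2-(1-a_1)(1-a_2)\le \beta^2-(1-\alpha)(\alpha+\beta)
\qquad\text{whenever } \alpha\le a_1\le a_2\le a_3\le\beta .
\]

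For this pointwise bound the decisive simplification is the identity $(1-a_1)(1-a_2)=a_3+a_1a_2$, valid because $1-a_1-a_2=a_3$. Hence the quantity to be maximized is $\Phi=a_3^2-a_3-a_1a_2$, and eliminating $a_2=1-a_1-a_3$ turns it into $F(a_1,a_3)=a_1^2+a_3^2+a_1a_3-a_1-a_3$. The key observation is that on the ordered region the partial derivatives have definite signs, $\partial_{a_1}F=2a_1+a_3-1=a_1-a_2\le 0$ and $\partial_{a_3}F=2a_3+a_1-1=a_3-a_2\ge 0$, so $F$ decreases in $a_1$ and increases in $a_3$. Since $F$ is moreover convex (Hessian $\left(\begin{smallmatrix}2&1\\1&2\end{smallmatrix}\right)$, eigenvalues $3,1$), its maximum over the compact convex feasible polygon is attained at a vertex, and the sign information singles out the vertex with $a_1$ smallest and $a_3$ largest, i.e. the corner $(a_1,a_3)=(\alpha,\beta)$, whose value is exactly $F(\alpha,\beta)=\beta^2-(1-\alpha)(\alpha+\beta)$. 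Integrating the resulting pointwise inequality against $dv$ then yields
\[
8\pi^2\chi(M)\le\Big(8\big(\beta^2-(1-\alpha)(\alpha+\beta)\big)+\tfrac{10}{3}\Big)\,\text{Vol}(M),
\]
which is the asserted estimate.

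The main obstacle is precisely the last step of the optimization: the corner $(\alpha,\beta)$ produced by the monotonicity need not respect the ordering $a_1\le a_2\le a_3$, since $a_2=1-\alpha-\beta$ may fall outside $[\alpha,\beta]$ (this happens exactly when $2\alpha+\beta>1$ or when $\alpha+2\beta<1$, which are mutually exclusive). So the delicate part is a short boundary analysis: when the corner is infeasible the constrained maximizer slides onto one of the faces $a_1=a_2$ or $a_2=a_3$, giving candidate values $3\alpha^2-2\alpha$ or $3\beta^2-2\beta$, and one must check these are still dominated by $\beta^2-(1-\alpha)(\alpha+\beta)$. This follows from an elementary factorable comparison (the relevant difference factors as $2(\beta-\alpha)\big(\beta-\tfrac{1-\alpha}{2}\big)$, which has the correct sign on the admissible range). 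These verifications are routine but are where care is needed to organize the cases correctly.
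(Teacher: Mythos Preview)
Your proof is correct and follows the same route as the paper: plug the pointwise bound of Lemma~\ref{k3k1} into the Gauss--Bonnet--Chern formula, then maximize $f(a_1,a_3)=a_3^2-(1-a_1)(a_1+a_3)$ over the admissible region. The paper simply asserts that ``standard technique yields'' the maximum at $(\alpha,\beta)$ on the quadrilateral cut out by $a_1=\alpha$, $a_3=\beta$, $2a_1+a_3=1$, $a_1+2a_3=1$, whereas you actually carry out the optimization via monotonicity and convexity, and you correctly flag the boundary subtlety when $(\alpha,\beta)$ is infeasible.

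One remark that would let you bypass your case analysis entirely: with $u=a_1-\alpha\ge 0$, $v=\beta-a_3\ge 0$, a direct computation gives
\[
F(\alpha,\beta)-F(a_1,a_3)=\big(u^2-uv+v^2\big)+u(a_2-a_1)+v(a_3-a_2),
\]
and every term on the right is nonnegative on the ordered region $a_1\le a_2\le a_3$. So $F(a_1,a_3)\le F(\alpha,\beta)$ holds unconditionally, regardless of whether $(\alpha,\beta)$ itself satisfies the ordering constraints, and the ``delicate boundary analysis'' you describe is not actually needed. (Your stated factor $2(\beta-\alpha)\big(\beta-\tfrac{1-\alpha}{2}\big)$ is also slightly off---the two relevant differences factor as $(\beta-\alpha)(2\alpha+\beta-1)$ and $(\beta-\alpha)(1-\alpha-2\beta)$---but the signs come out right in the respective cases, so the conclusion stands.)
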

\begin{proof}
Recall, 
\[8\pi^2 \chi(M)=\int_M (|\WW^+|^2+|\WW^-|^2+\frac{\SS^2}{24}) d\mu.\]
By Lemma \ref{k3k1}, at each point,
\[|W^{+}|^2+|W^{-}|^2 \leq 8(a_3^2-(1-a_1)(1-a_2)+\frac{1}{3}).\]
So it remains to maximize, given the pinching condition on $a_1, ~a_3$,
\[f(a_1, a_3)=a_3^2-(1-a_1)(a_1+a_3).\]
By the algebraic properties of $a_1, ~a_3$, the domain here is a quadrilateral determined by lines $x=\alpha, y=\beta, 2x+y=1, 2y+x=1$ (which is already within the half-plane $y\geq x$). So standard technique yields, 
\[|W^{+}|^2+|W^{-}|^2 \leq 8(\beta^2-(1-\alpha)(\alpha+\beta)+\frac{1}{3}).\]
The result then follows. 
\end{proof}
\begin{remark}
In \cite{gl99}, the authors show that if $\WW^+\not \equiv 0$ then,
\[\int_M |\WW^+|^2 dv\geq \frac{2}{3}\text{Vol}(M).\]
That is, if the Einstein 4-manifold is not half-conformally flat then, 
\[8\pi^2 \chi(M) \geq 2 \text{Vol}(M).\]
They also observe that if $a_1\geq 0$ then, 
\[8\pi^2 \chi(M) \leq \frac{10}{3}\text{Vol}(M).\]
As a consequence, it follows that \[9\geq \chi(M)>\frac{15}{4}|\tau(M)|.\]
Therefore, there are only finitely many homeomorphism types for an Einstein 4-manifold with non-negative sectional curvature and not half-conformally flat. Corollary \ref{eulerestimate} then gives a more precise description of the relation between the topology type and bound on the sectional curvature. For instance, if $a_1> \alpha=\frac{2-\sqrt{3}}{6}\approx .04466$ then we could choose $\beta=1-2\alpha$ and then $(|\tau|, \chi)$ could only be one of the following choices $(1, 5), (1,7),$ $(0, 2), (0,4), (0,6)$. 
\end{remark}
\subsection{Differential Estimates} 
Here we derive several consequences of (\ref{Hamilton}). First we observe the following. 
\begin{lemma}
\label{algebraic2}
Let $xy\leq 0$ and assume
\begin{align*}
|2x+y| &\leq a,\\
|x-y| &\leq b.
\end{align*}
If $2a<b$ then \[4xy+x^2+y^2\geq \frac{1}{3}(2a^2-2ab-b^2).\] 
If $2a\geq b$ then  \[4xy+x^2+y^2\geq -\frac{1}{2}b^2.\]
\end{lemma}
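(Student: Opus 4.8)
The plan is to reduce the two-variable constrained inequality to a clean one-dimensional optimization by passing to the symmetric/antisymmetric coordinates $s=x+y$ and $d=x-y$. First I would record the algebraic identity
\[
4xy+x^2+y^2=\tfrac32 s^2-\tfrac12 d^2,
\]
which follows from $4xy=s^2-d^2$ and $x^2+y^2=\tfrac12(s^2+d^2)$. In these coordinates the three hypotheses become transparent: $|x-y|\le b$ reads $|d|\le b$; the bound $|2x+y|\le a$ becomes $|3s+d|\le 2a$, since $2x+y=\tfrac12(3s+d)$; and $xy\le 0$ is equivalent to $s^2\le d^2$. Thus the problem is to minimize $g(s,d)=\tfrac32 s^2-\tfrac12 d^2$ over the compact region cut out by these constraints, which is symmetric under $(s,d)\mapsto(-s,-d)$.

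The case $2a\ge b$ is then immediate: since $\tfrac32 s^2\ge 0$ and $d^2\le b^2$, we get $g\ge-\tfrac12 d^2\ge-\tfrac12 b^2$, with no further work. (This bound in fact holds in general; the role of the hypothesis $2a\ge b$ is only that it makes $-\tfrac12 b^2$ attainable, by allowing $s=0$ together with $|d|=b$ while respecting $|3s+d|\le 2a$.)

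For the case $2a<b$ I would run a short one-variable minimization. By the symmetry we may assume $d\ge 0$, and for fixed $d$ we minimize $g$ by making $s^2$ as small as possible subject to $|3s+d|\le 2a$, i.e.\ by taking $s$ to be the point of $\bigl[\tfrac{-2a-d}{3},\tfrac{2a-d}{3}\bigr]$ nearest $0$. This splits into the regime $d\le 2a$, where $s=0$ is admissible and $g=-\tfrac12 d^2\ge-2a^2$, and the regime $d>2a$, where the nearest admissible choice is $s=\tfrac{2a-d}{3}$, giving $g=\tfrac16(d-2a)^2-\tfrac12 d^2=\tfrac13(2a^2-2ad-d^2)$, a decreasing function of $d$ on $(2a,b]$ and hence $\ge\tfrac13(2a^2-2ab-b^2)$ at $d=b$. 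One then compares the two candidate minima via $\tfrac13(2a^2-2ab-b^2)+2a^2=\tfrac13(2a-b)(4a+b)\le 0$, which holds precisely because $2a<b$; so the corner value at $d=b$ is the global minimum, yielding the claimed bound.

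The computations are elementary, so the main obstacle is organizational: correctly identifying which constraints are active at the minimizer and comparing the two candidate minima (the value $-2a^2$ from $d\le 2a$ and the corner value $\tfrac13(2a^2-2ab-b^2)$ from $d=b$) in the right direction. I would also verify that the minimizing points satisfy $xy\le 0$, i.e.\ $s^2\le d^2$, so that restricting to the hypothesis region does not raise the minimum; at $s=\tfrac{2a-d}{3}$, $d=b$ this reduces to the trivial inequality $b-2a\le 3b$.
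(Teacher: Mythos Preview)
Your argument is correct. Both you and the paper reduce the problem via a linear change of variables and then carry out an elementary one-variable boundary analysis, so the strategies are the same in spirit; the difference is in the choice of coordinates. The paper sets $m=2x+y$, $n=x-y$, computes $9(4xy+x^2+y^2)=6m^2-6mn-3n^2$, and minimizes over the rectangle $|m|\le a$, $|n|\le b$ (ignoring the constraint $xy\le 0$, which is harmless since enlarging the region can only lower the minimum). This has the advantage that the feasible set is an axis-aligned rectangle, at the cost of a cross-term in the objective. Your choice $s=x+y$, $d=x-y$ trades this: the objective $\tfrac32 s^2-\tfrac12 d^2$ is diagonal, but the constraint $|3s+d|\le 2a$ is skew, so you split on whether $s=0$ is admissible. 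Either way the same two candidate values $-\tfrac12 b^2$ and $\tfrac13(2a^2-2ab-b^2)$ emerge, and your comparison via the factorization $8a^2-2ab-b^2=(2a-b)(4a+b)$ is exactly the discriminating step. Your verification that the minimizer respects $xy\le 0$ is not needed for the stated inequalities (a minimum over a superset still gives a valid lower bound), but it does confirm sharpness.
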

\begin{proof}
Let $m=2x+y$ and $n=x-y$ then
\begin{align*}
9(4xy+x^2+y^2) &=(m+n)^2+(m-2n)^2+4(m+n)(m-2n)\\
&=6m^2-3n^2-6mn\:=f(m,n).
\end{align*}
Consider the region $D=\{-a\leq m\leq a; -b\leq n\leq b\}$. The only critical point of $f(m,n)$ is $(0,0)$ and $f(0,0)=0$. So we consider the function along the boundary of $D$ and the result follows. 
\end{proof}
Recall that \textbf{$p$} is the point that realizes the minimum of the sectional curvature of $(M^4,g)$ by the tangent plane spanned by $\{e_1,e_2\}$. At point $p$, we get:
\begin{equation*}
\begin{split}
a_1^2+b_1^2+2(a_2a_3+b_2b_3)&\leq a_1.
\end{split} 
\end{equation*}
Also by Prop \ref{berger} we observe,
\begin{align*}
|b_2-b_3| &\leq a_3-a_2=b,\\
|b_2-b_1|=|2b_2+b_3|&\leq a_2-a_1=a,\\
b_1^2+2b_2b_3 &=b_2^2+b_3^2+4b_2b_3.
\end{align*}
So Lemma \ref{algebraic2} implies the followings: 
\begin{itemize}
\item If $2a<b$ or $a_1+1\geq 4a_2$ then 
\[b_1^2+2b_2b_3\geq (a_2-a_1)^2 -\frac{1}{3}(a_3-a_1)^2.\]
\item If $2a\geq b$ or $a_1+1\leq 4a_2$ then 
\[b_1^2+2b_2b_3\geq -\frac{1}{2}(a_3-a_2)^2.\]
\end{itemize}
Consequently, it is possible to obtain a lower bound for sectional curvature given an upper bound.
\begin{lemma}
\label{Kupper}
At point $p$, suppose $a_3=\alpha\leq 1$ then we have:
\begin{align*}
4a_2 &\leq a_1+1,\\ 
a_1 & \geq \frac{1}{28}(15-8\alpha-\sqrt{3}\sqrt{96\alpha^2-80\alpha+19}).
\end{align*} 
\end{lemma}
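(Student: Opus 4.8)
## Proof Proposal for Lemma \ref{Kupper}

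The plan is to start from the differential inequality (\ref{Hamilton}) at the point $p$ and substitute the two algebraic bounds on $b_1^2 + 2b_2b_3$ derived just above via Lemma \ref{algebraic2}. Recall that (\ref{Hamilton}) reads $a_1^2 + b_1^2 + 2(a_2a_3 + b_2b_3) \leq a_1$. The quantity $b_1^2 + 2b_2b_3$ is controlled differently depending on whether $2a < b$ or $2a \geq b$, i.e. depending on the sign of $a_1 + 1 - 4a_2$. So first I would dispose of the harder-to-control branch. Suppose for contradiction that $4a_2 > a_1 + 1$, which puts us in the case $2a < b$, giving $b_1^2 + 2b_2b_3 \leq (a_2 - a_1)^2 - \tfrac{1}{3}(a_3 - a_1)^2$. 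Plugging this into (\ref{Hamilton}) together with $a_1 + a_2 + a_3 = 1$ (so $a_2 = 1 - a_1 - a_3$) yields a polynomial inequality purely in $a_1$ and $a_3 = \alpha$. The goal is to show this inequality is incompatible with the standing assumption $\alpha \leq 1$ and the algebraic constraints $a_1 \leq a_2 \leq a_3$, thereby forcing $4a_2 \leq a_1 + 1$; this establishes the first claimed inequality.

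Once the first inequality is in hand, I am in the regime $2a \geq b$, where Lemma \ref{algebraic2} gives the cleaner bound $b_1^2 + 2b_2b_3 \leq -\tfrac{1}{2}(a_3 - a_2)^2$. Substituting this into (\ref{Hamilton}) produces
\begin{equation*}
a_1^2 + 2a_2a_3 - \tfrac{1}{2}(a_3 - a_2)^2 \leq a_1.
\end{equation*}
Now I eliminate $a_2$ using $a_2 = 1 - a_1 - a_3 = 1 - a_1 - \alpha$, turning the left-hand side into an expression in the single unknown $a_1$ with $\alpha$ as a parameter. After expanding, this is a quadratic inequality in $a_1$ of the form $A a_1^2 + B(\alpha) a_1 + C(\alpha) \leq 0$ (one should check the leading coefficient is positive so the admissible $a_1$ lie between the two roots). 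Solving for the lower root via the quadratic formula should reproduce exactly
\[
a_1 \geq \frac{1}{28}\bigl(15 - 8\alpha - \sqrt{3}\sqrt{96\alpha^2 - 80\alpha + 19}\bigr),
\]
with the discriminant collapsing to the stated $3(96\alpha^2 - 80\alpha + 19)$.

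The main obstacle I anticipate is twofold. In the first step, the contradiction is not automatic from (\ref{Hamilton}) alone: showing that $4a_2 > a_1 + 1$ is impossible likely requires feeding in the ordering $a_2 \leq a_3 = \alpha$ and the trace relation simultaneously, and verifying that the resulting polynomial in $(a_1, \alpha)$ has no admissible solution over the relevant range of $\alpha \leq 1$. Care is needed because the inequality must fail for \emph{all} admissible configurations, so I would reduce to checking the sign of a one-variable polynomial after eliminating $a_2$, possibly examining boundary cases $a_1 = a_2$ and $a_2 = a_3$ separately. In the second step, the algebra is routine but error-prone: one must track signs through the completion of the square and confirm the correct root (the lower one) is selected, matching the coefficient $\tfrac{1}{28}$ and the precise radicand. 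I expect the bookkeeping around which branch of the quadratic gives the valid lower bound on $a_1$ to be where sign errors are most likely to creep in.
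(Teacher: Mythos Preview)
Your overall structure mirrors the paper's, but you have swapped the two cases, and this is fatal for Step~2. Recall $a = a_2 - a_1$ and $b = a_3 - a_2$; then $2a \geq b$ is equivalent to $4a_2 \geq a_1 + 1$, not the reverse. So the assumption $4a_2 > a_1 + 1$ is the regime $2a \geq b$, where Lemma~\ref{algebraic2} gives the bound involving $-\tfrac{1}{2}(a_3 - a_2)^2$; and the complementary regime $4a_2 \leq a_1 + 1$ is where the bound $(a_2 - a_1)^2 - \tfrac{1}{3}(a_3 - a_1)^2$ applies. The paper uses the former to derive the contradiction and the latter to derive the quadratic with leading coefficient $14$ and hence the factor $\tfrac{1}{28}$.

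Concretely, if you carry out your Step~2 as written (substituting $-\tfrac{1}{2}(a_3 - a_2)^2$ with $a_2 = 1 - \delta - \alpha$), you obtain $\delta^2 - 8\alpha\delta + 8\alpha - 8\alpha^2 - 1 \leq 0$, whose discriminant is $96\alpha^2 - 32\alpha + 4$, not $3(96\alpha^2 - 80\alpha + 19)$, and whose roots do not carry the $\tfrac{1}{28}$ prefactor. The correct quadratic $14\delta^2 + (8\alpha - 15)\delta + 3 - 4\alpha^2 \leq 0$ only emerges from the other bound. Two further gaps: (i) Lemma~\ref{algebraic2} assumes $b_2 b_3 \leq 0$, so you must first dispose of the case $b_2 b_3 \geq 0$ separately (the paper does this directly from (\ref{Hamilton}), showing it forces $a_1 = a_2 \in \{0, \tfrac{1}{3}\}$); (ii) the bounds from Lemma~\ref{algebraic2} are \emph{lower} bounds on $b_1^2 + 2b_2 b_3 = b_2^2 + b_3^2 + 4b_2 b_3$, not upper bounds---the paper's ``$\leq$'' in the itemize after Lemma~\ref{algebraic2} is a typo, as its own proofs of Lemmas~\ref{Kupper} and~\ref{Kdiffbound} make clear.
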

\begin{proof}
Let $\delta=a_1=\min K$ at point $p$ then we have
\begin{align*}
a_3 &=\alpha,\\
a_1 &=\delta,\\
a_2 &= 1-\alpha-\delta,\\
a_2+a_3 &=1-\delta,\\
a_2a_3 &= (1-\alpha-\delta)\alpha. 
\end{align*}

If $b_2b_3\geq 0$ then equation (\ref{Hamilton}) becomes
\[\delta=a_1\geq a_1^2+2a_2a_3\geq \delta^2+2\delta(1-2\delta)=2\delta-3\delta^2.\]
If $a_1< 0$ then $a_2< 0$ and, consequently $a_3> 1$, a contradiction. So either $a_1=a_2=0$ or $a_1=a_2=\frac{1}{3}$. \\

If $b_2b_3< 0$ then we consider two cases.

\textbf{Case 1:} $4a_2 > 1+a_1$ and, by the discussion following Lemma \ref{algebraic2}, we have 
\begin{align*}
\delta=a_1 &\geq a_1^2+2a_2a_3+b_2^2+b_3^2+4b_2 b_3\\
 \delta &\geq \delta^2+2(1-\alpha-\delta)\alpha-\frac{( 2\alpha-1+\delta)^2}{2};\\
 0 &\geq \delta^2- 8\alpha\delta+8\alpha-8\alpha^2-1.
\end{align*} 
As a consequence, 
\[4\alpha+\sqrt{24\alpha^2+1-8\alpha}\geq \delta\geq 4\alpha-\sqrt{24\alpha^2+1-8\alpha}.\]
So, 
\begin{align*}
1 &=a_1+a_2+a_3\\
 &> a_1+\frac{1}{4}(1+a_1)+\alpha,\\
\frac{3}{4}&> \frac{5}{4}(4\alpha-\sqrt{24\alpha^2+1-8\alpha})+\alpha\\
&> 6\alpha -\frac{5}{4}\sqrt{24\alpha^2+1-8\alpha}. 
\end{align*}
But that is a contradiction for $\frac{1}{3}\leq \alpha\leq 1$. Thus this case does not hold.

\textbf{Case 2:} $4a_2 \leq 1+a_1$ and, by the discussion following Lemma \ref{algebraic2}, we have 
\begin{align*}
\delta=a_1 &\geq a_1^2+2a_2a_3+b_2^2+b_3^2+4b_2 b_3\\
 \delta &\geq \delta^2+2(1-\alpha-\delta)\alpha + (1-\alpha-2\delta)^2-\frac{(\alpha-\delta)^2}{3};\\
 0 &\geq 14\delta^2+(8\alpha-15)\delta+3-4\alpha^2.
\end{align*} 
As a consequence, 
\[\frac{1}{28}(15-8\alpha+\sqrt{3}\sqrt{96\alpha^2-80\alpha+19}\geq \delta\geq \frac{1}{28}(15-8\alpha-\sqrt{3}\sqrt{96\alpha^2-80\alpha+19}.\]

\end{proof}

\begin{coro}
\label{possec}
At $p$, if $a_3\leq \frac{\sqrt{3}}{2}\approx .866025$ then $K\geq 0$.
\end{coro}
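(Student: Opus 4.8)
The plan is to read the conclusion straight off Lemma \ref{Kupper}. Since $p$ is by construction the point realizing the global minimum of the sectional curvature, the quantity $a_1=\min K$ at $p$ is the global minimum of $K$ over all of $M$. Hence it suffices to prove that $a_1\geq 0$; this forces $K\geq 0$ everywhere. Note also that $\alpha=a_3\geq \tfrac13$ automatically, because $a_1\leq a_2\leq a_3$ together with $a_1+a_2+a_3=1$ gives $3\alpha\geq 1$, so the relevant range is $\tfrac13\leq \alpha\leq \tfrac{\sqrt3}{2}$.

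First I would invoke Lemma \ref{Kupper} with $\alpha=a_3\leq \tfrac{\sqrt3}{2}<1$, which supplies the lower bound
\[
a_1 \geq \frac{1}{28}\Big(15-8\alpha-\sqrt{3}\sqrt{96\alpha^2-80\alpha+19}\Big).
\]
Thus the whole matter reduces to the one-variable inequality $15-8\alpha\geq \sqrt{3}\sqrt{96\alpha^2-80\alpha+19}$ on this range. Both sides are non-negative there ($15-8\alpha>0$ since $\alpha\leq 1$, and the radicand is positive), so squaring is reversible. Squaring yields $(15-8\alpha)^2\geq 3(96\alpha^2-80\alpha+19)$; upon expanding, the linear terms cancel and one is left with $168\geq 224\alpha^2$, that is, $\alpha^2\leq \tfrac34$, which is exactly $\alpha\leq \tfrac{\sqrt3}{2}$. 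This shows the displayed lower bound is non-negative precisely under our hypothesis, so $a_1\geq 0$ and the proof is complete.

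I do not expect a genuine obstacle: the substance is contained in Lemma \ref{Kupper}, and what remains is a short algebraic verification. The only points meriting care are confirming the hypothesis $\alpha\leq 1$ of that lemma (automatic here), and recording that the threshold $\tfrac{\sqrt3}{2}$ is sharp—it is exactly the value at which the discriminant computation $\alpha^2=\tfrac34$ makes the curvature lower bound vanish. (If one prefers to avoid appealing to the lemma's conclusion in the degenerate subcase $b_2b_3\geq 0$, one simply observes that there $a_1=a_2=0$ or $a_1=a_2=\tfrac13$, both of which already satisfy $a_1\geq 0$.)
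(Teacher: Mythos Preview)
Your proof is correct and follows essentially the same approach as the paper: invoke Lemma~\ref{Kupper}, reduce to the inequality $15-8\alpha\geq \sqrt{3}\sqrt{96\alpha^2-80\alpha+19}$, and square to obtain $\alpha^2\leq \tfrac34$. You supply a bit more detail (e.g., justifying that both sides are non-negative before squaring and noting $\alpha\geq \tfrac13$), but the argument is the same.
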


\begin{proof}
Let $a_3=\alpha$ at point p. Then by Lemma \ref{Kupper},
\begin{align*}
a_1 &\geq 0,\\
\leftarrow 15-8\alpha &\geq \sqrt{3}\sqrt{96\alpha^2-80\alpha+19} ,\\
\leftrightarrow -\frac{\sqrt{3}}{2} \leq \alpha &\leq \frac{\sqrt{3}}{2}. 
\end{align*}
\end{proof}
\begin{remark}
Z. Zhang \cite{zhang2016four} obtains a similar result but we fail to follow the proof. 
\end{remark}

\begin{lemma}
\label{Kdiffbound}
At point $p$ suppose $0\leq a_3-a_2=\gamma<2$ then 
\begin{align*}
a_1 &\geq \min\{\frac{1}{6}(3-2\gamma-\sqrt{1+8\gamma^2-4\gamma}), \frac{1}{3}(2-\sqrt{1+3\gamma^2})\}.
\end{align*}
\end{lemma}

\begin{proof}
Let $\delta=a_1=\min K$ at point $p$ then we have
\begin{align*}
a_2+a_3 &=1-\delta,\\
a_3-a_2 &= \gamma,\\
a_2-a_1 &=\frac{1-\gamma-3\delta}{2},\\
4a_2a_3=(a_2+a_3)^2-(a_2-a_3)^2 &= (1-\delta)^2-\gamma^2.
\end{align*}

If $b_2b_3\geq 0$ then equation (\ref{Hamilton}) yields
\begin{align*}
\delta=a_1 &\geq a_1^2+2a_2a_3= \delta^2+\frac{1}{2}((1-\delta)^2-\gamma^2);\\
0 &\geq \frac{3}{2}\delta^2-2\delta+\frac{1}{2}(1-\gamma^2)
\end{align*}
Therefore we have, 
\[\frac{1}{3}(2-\sqrt{1+3\gamma^2})\leq \delta \leq \frac{1}{3}(2+\sqrt{1+3\gamma^2}). \]

If $b_2b_3< 0$ then we consider two cases.

\textbf{Case 1:} If $4a_2> a_1+1$ then, by the discussion following Lemma \ref{algebraic2}, equation (\ref{Hamilton}) becomes
\begin{align*}
\delta=a_1 &\geq a_1^2+2a_2a_3+b_2^2+b_3^2+4b_2 b_3\\
 \delta &\geq \delta^2+\frac{1}{2}((1-\delta)^2-\gamma^2)-\frac{\gamma^2}{2};\\
 0 &\geq 3\delta^2- 4\delta+1-2\gamma^2.
\end{align*} 
As a consequence, 
\[\frac{2+\sqrt{1+6\gamma^2}}{3}\geq \delta\geq \frac{2-\sqrt{1+6\gamma^2}}{3}.\]
So,
\begin{align*}
1 &=a_1+a_2+a_3,\\
&=a_1+2a_2+\gamma > \frac{1}{2}+\frac{3}{2}a_1+\gamma;\\
\frac{1}{2}&> \alpha+1-\frac{1}{2}\sqrt{1+6\gamma^2}. 
\end{align*}
For $0\leq \gamma<2$, the inequality above is impossible. So this case does not happen.

\textbf{Case 2:} If $4a_2\leq a_1+1$ then, by the discussion following Lemma \ref{algebraic2}, equation (\ref{Hamilton}) becomes
\begin{align*}
\delta=a_1 &\geq a_1^2+2a_2a_3+b_2^2+b_3^2+4b_2 b_3\\
 \delta &\geq \delta^2+\frac{1}{2}((1-\delta)^2-\gamma^2)+(\frac{1-\gamma-3\delta}{2})^2-\frac{1}{3}(\frac{1+\gamma-3\delta}{2})^2;\\
 0 &\geq 3\delta^2- \delta(3-2\gamma)+\frac{2}{3}-\frac{2\gamma}{3}-\frac{\gamma^2}{3}.
\end{align*} 
As a consequence, 
\[\frac{1}{6}(3-2\gamma+\sqrt{1+8\gamma^2-4\gamma})\geq \delta\geq \frac{1}{6}(3-2\gamma-\sqrt{1+8\gamma^2-4\gamma}).\]

\end{proof}
\begin{remark}
\label{rmdiffK}
It is straightforward to check that each lower bound is a decreasing function for $\gamma\geq 0$. Moreover, for $\gamma\geq \frac{\sqrt{3}-1}{4}$,  
\[\frac{1}{3}(2-\sqrt{1+3\gamma^2})\geq \frac{1}{6}(3-2\gamma-\sqrt{1+8\gamma^2-4\gamma}).\]
\end{remark}
\begin{coro}
\label{possec1}
At $p$, if $a_3-a_2\leq \sqrt{3}-1$ then $K\geq 0$.
\end{coro}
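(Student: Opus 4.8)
The plan is to invoke Lemma \ref{Kdiffbound} directly, exactly in the way Corollary \ref{possec} is deduced from Lemma \ref{Kupper}. Set $\alpha = a_3 - a_2$ and recall that $a_1 = \min K$ at the point $p$. Since the hypothesis $a_3 - a_2 \leq \sqrt{3} - 1$ falls within the range $0 \leq \alpha < 2$ of Lemma \ref{Kdiffbound}, we obtain the lower bound $a_1 \geq \beta = \tfrac{1}{6}(3 - 2\alpha - \sqrt{1 + 8\alpha^2 - 4\alpha})$. Thus it suffices to show that $\beta \geq 0$ whenever $0 \leq \alpha \leq \sqrt{3} - 1$, for then $a_1 \geq \beta \geq 0$ and hence $K \geq 0$.

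This reduces the statement to the scalar inequality $3 - 2\alpha \geq \sqrt{1 + 8\alpha^2 - 4\alpha}$. First I would note that the left-hand side is positive on the relevant range, since $\alpha \leq \sqrt{3} - 1 < \tfrac{3}{2}$; hence both sides are nonnegative and squaring is an equivalence. Squaring and simplifying eliminates the radical and collapses everything to the clean quadratic condition $\alpha^2 + 2\alpha - 2 \leq 0$. The roots of $\alpha^2 + 2\alpha - 2$ are $\alpha = -1 \pm \sqrt{3}$, so this inequality holds precisely for $-1 - \sqrt{3} \leq \alpha \leq -1 + \sqrt{3} = \sqrt{3} - 1$. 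Combined with the hypothesis $0 \leq \alpha \leq \sqrt{3} - 1$, the condition is satisfied, giving $\beta \geq 0$ and completing the argument.

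There is essentially no obstacle here; the proof is a routine sign check once Lemma \ref{Kdiffbound} is in hand. The only points requiring care are verifying that $3 - 2\alpha \geq 0$ before squaring, which the hypothesis guarantees, and confirming that the threshold $\sqrt{3} - 1$ is exactly the positive root of $\alpha^2 + 2\alpha - 2$, which shows the stated bound is the sharp one extractable from the lemma.
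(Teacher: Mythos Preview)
Your proof is correct and follows essentially the same route as the paper: apply Lemma \ref{Kdiffbound} to bound $a_1$ from below by $\beta$, then reduce $\beta\geq 0$ to the quadratic inequality $\alpha^2+2\alpha-2\leq 0$ with roots $-1\pm\sqrt{3}$. If anything, your write-up is slightly cleaner than the paper's, since you explicitly justify the squaring step via $3-2\alpha>0$ and you cite the correct lemma (the paper's printed proof refers to Lemma \ref{Kupper}, which is a typo for Lemma \ref{Kdiffbound}).
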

\begin{proof}
Let $\sqrt{3}-1=\gamma$. Then by Lemma \ref{Kdiffbound} and Remark \ref{rmdiffK},
\begin{align*}
6 a_1 &\geq 3-2\gamma -\sqrt{1+8\gamma^2-4\gamma},\\
 &\geq 0.  
\end{align*}
The result follows.
\end{proof}

Lemmas \ref{Kupper} and \ref{Kdiffbound} imply that, at p, $a_2$ and $a_1$ are close to each other if there is an upper bound on $a_3$ or $a_3-a_2$. The next result estimates $a_2-a_1$ by $a_1$. 
\begin{lemma}
\label{a2a1}
Suppose at point $p$, 
\begin{align*}
a_1 &=\delta\geq 0,\\
a_2 &=x+\delta\leq \frac{1}{4}+\frac{\delta}{4}.
\end{align*}
Then \[0\leq x\leq 1-3\delta -\frac{1}{2}\sqrt{3+18\delta^2-15\delta}.\]
\end{lemma}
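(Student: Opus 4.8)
The plan is to run the same scheme as in Lemmas \ref{Kupper} and \ref{Kdiffbound}: feed a lower bound for the off-diagonal term $b_1^2 + 2b_2 b_3$ into the differential inequality (\ref{Hamilton}) and read off the resulting algebraic constraint on $x$. The lower bound $x \geq 0$ is immediate, since $a_1 = \min K$ in Berger's decomposition (Proposition \ref{berger}) forces $a_2 \geq a_1$, i.e. $x = a_2 - a_1 \geq 0$. For the upper bound, first I would observe that the hypothesis $a_2 \leq \frac{1}{4} + \frac{\delta}{4}$ is exactly $4a_2 \leq 1 + a_1$, which, writing $a = a_2 - a_1$ and $b = a_3 - a_2$, is the regime $2a \leq b$; this is the first of the two cases recorded after Lemma \ref{algebraic2}.

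Next I would establish the single inequality $b_1^2 + 2b_2 b_3 \geq (a_2 - a_1)^2 - \frac{1}{3}(a_3 - a_1)^2$ valid regardless of the sign of $b_2 b_3$. When $b_2 b_3 < 0$ this is Lemma \ref{algebraic2} applied to $(b_2, b_3)$ via $|2b_2 + b_3| \leq a$ and $|b_2 - b_3| \leq b$ together with $2a \leq b$. When $b_2 b_3 \geq 0$ the left-hand side is nonnegative, while $2a \leq b$ makes the right-hand side nonpositive (indeed $(a_2 - a_1)^2 = a^2 \leq \frac{1}{4}b^2 \leq \frac{1}{3}(a+b)^2 = \frac{1}{3}(a_3 - a_1)^2$), so the inequality holds trivially. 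Substituting this into (\ref{Hamilton}) gives, in every case, $a_1 \geq a_1^2 + 2a_2 a_3 + (a_2 - a_1)^2 - \frac{1}{3}(a_3 - a_1)^2$.

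Then I would substitute $a_1 = \delta$, $a_2 = x + \delta$, and $a_3 = 1 - 2\delta - x$ (from $a_1 + a_2 + a_3 = 1$) and simplify. A routine computation collapses the displayed inequality to the quadratic constraint $x^2 - (2 - 6\delta)x + \left(\frac{9}{2}\delta^2 - \frac{9}{4}\delta + \frac{1}{4}\right) \geq 0$, whose discriminant is exactly $18\delta^2 - 15\delta + 3$ and whose roots are $(1 - 3\delta) \pm \frac{1}{2}\sqrt{3 + 18\delta^2 - 15\delta}$.

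It remains to select the correct branch, which I regard as the only point requiring care (the rest being bookkeeping). The hypothesis gives the a priori bound $x = a_2 - \delta \leq \frac{1 - 3\delta}{4}$, and since $a_1 \leq \frac{1}{3}$ forces $\delta \leq \frac{1}{3}$, the larger root $(1 - 3\delta) + \frac{1}{2}\sqrt{3 + 18\delta^2 - 15\delta} \geq 1 - 3\delta$ strictly exceeds $\frac{1 - 3\delta}{4}$. Hence $x$ cannot lie on the upper branch, and the quadratic constraint forces $x \leq 1 - 3\delta - \frac{1}{2}\sqrt{3 + 18\delta^2 - 15\delta}$, as claimed.
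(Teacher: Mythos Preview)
Your proof is correct and follows essentially the same scheme as the paper's: feed a lower bound on $b_1^2+2b_2b_3$ into inequality (\ref{Hamilton}) and solve the resulting quadratic in $x$, arriving at the identical constraint $4x^2-8(1-3\delta)x+(1-9\delta+18\delta^2)\geq 0$. You streamline one step: rather than handling the case $b_2b_3\geq 0$ by a separate argument (the paper shows there that $a_1+a_2\geq \tfrac{2}{3}$, forcing $x=0$), you observe that under the hypothesis $4a_2\leq 1+a_1$ the quantity $(a_2-a_1)^2-\tfrac{1}{3}(a_3-a_1)^2$ is nonpositive, so the single inequality $b_1^2+2b_2b_3\geq (a_2-a_1)^2-\tfrac{1}{3}(a_3-a_1)^2$ covers both sign cases at once. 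Your explicit branch selection via the a priori bound $x\leq \tfrac{1-3\delta}{4}$ is also a bit more transparent than the paper's bare assertion of the smaller root.
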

\begin{proof}
We have \[a_3 =\alpha=1-2\delta-x.\]
If $b_2b_3\geq 0$ then equation (\ref{Hamilton}) yields
\begin{align*}
\delta=a_1 &\geq a_1^2+2a_2a_3\geq \delta^2+2(x+\delta)(1-2\delta-x);\\
0 &\leq 2x^2 +x(6\delta-2)+3\delta^2-\delta.
\end{align*}
Therefore,
\[\delta\geq \frac{1}{6}(1-6x+\sqrt{12x^2+12x+1}).\]
So, for $0\leq x\leq 1$,
\[a_1+a_2=2a_1+x\geq x+ \frac{1}{3}(1-6x+\sqrt{12x^2+12x+1})\geq \frac{2}{3}.\]
Thus, $x=0$. \\

We consider $b_2b_3<0$, since $4a_2\leq 1+a_1$, applying the discussion after Lemma \ref{algebraic2} into equation (\ref{Hamilton}) yields
\begin{align*}
\delta =a_1 &\geq \delta^2+2(x+\delta)\alpha+(b_2^2+b_3^2+4b_2b_3),\\
\delta =a_1 &\geq \delta^2+2(x+\delta)\alpha+\frac{1}{3}(2x^2-2x(\alpha-x-\delta)-(\alpha-x-\delta)^2),\\
&\geq \delta^2+2(x+\delta)\alpha+x^2-\frac{1}{3}(\alpha-\delta)^2;\\
0 &\geq x^2+2x\alpha+2\delta\alpha+\delta^2-\delta-\frac{1}{3}(\alpha-\delta)^2.
\end{align*} 
Substituting $\alpha=1-2\delta-x$ then yields,
\begin{align*}
0 &\leq 4x^2-8x(1-3\delta)+(1-9\delta+18\delta^2). 
\end{align*}
This quadratic has the discriminant
\[D=48(1-5\delta+6\delta^2)=48(1-2\delta)(1-3\delta)\geq 0.\]
As a consequence,
\[ x\leq 1-3\delta -\frac{1}{2}\sqrt{3+18\delta^2-15\delta}.\]
Then the result follows. 
\end{proof}
\begin{remark}
For $0\leq \delta\leq \frac{1}{3}$, 
\[1-3\delta -\frac{1}{2}\sqrt{3+18\delta^2-15\delta}<\frac{1}{4}(1-3\delta).\]
\end{remark}
\section{Classification}
\label{classify}
This section proves the main result. 
\begin{theorem}
\label{mainupper}
Let $(M, g)$ be an Einstein four-manifold $\Rc=g$ 
such that at each point, \[K\leq \beta=\frac{14-\sqrt{19}}{12}\approx .8034.\]
Then it is isometric to either $(S^4,g_{0})$, $(\mathbb{RP}^4, g_0)$, or $(\mathbb{CP}^2,\ g_{FS})$ up to rescaling. 
\end{theorem}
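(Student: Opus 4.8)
The plan is to reduce the theorem to the pointwise Weyl bound of Proposition \ref{propWpm} and to supply that bound from the pinching estimate of Lemma \ref{k3k1} together with the differential inequality at the minimum point. By Proposition \ref{propWpm} it suffices to verify that $|W^+|+|W^-|\leq \sqrt{3/2}$ at every point of $M$. Using Berger's decomposition (Proposition \ref{berger}) and writing $\alpha=a_2+a_3>0$, $\delta=2(a_3-a_2)\geq 0$ at a given point, Lemma \ref{k3k1} gives $|W^+|+|W^-|\leq (6\alpha+\delta-4)/\sqrt{6}$. Since $\sqrt{3/2}=\sqrt{6}/2$, the desired bound holds as soon as $6\alpha+\delta\leq 7$. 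Because $a_1+a_2+a_3=1$, one computes $6\alpha+\delta=4a_2+8a_3=4+4(a_3-a_1)$, so the entire theorem reduces to establishing the pointwise pinching $a_3-a_1\leq 3/4$.

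The next step is to obtain this pinching from a uniform lower bound on the sectional curvature. Let $p$ be the point realizing $\min K=a_1(p)$. Since $K\leq\beta<1$, the hypothesis $a_3(p)\leq\beta$ of Lemma \ref{Kupper} holds, and that lemma (a consequence of inequality (\ref{Hamilton})) yields $a_1(p)\geq h(a_3(p))$, where $h(x)=\frac{1}{28}(15-8x-\sqrt{3}\sqrt{96x^2-80x+19})$. The calibration of $\beta$ is exactly what makes the argument close: substituting $x=\beta=\frac{14-\sqrt{19}}{12}$ one finds $96\beta^2-80\beta+19=69-12\sqrt{19}$, and hence, via the identity $\sqrt{3(69-12\sqrt{19})}=\sqrt{207-36\sqrt{19}}=3\sqrt{19}-6$, that $h(\beta)=\frac{5-\sqrt{19}}{12}=\beta-\tfrac34$. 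Since $a_1\leq a_2\leq a_3$ and the $a_i$ sum to $1$ force $a_3(p)\in[\tfrac13,\beta]$, and since $h$ is non-increasing on this range, we conclude $a_1(p)\geq h(a_3(p))\geq h(\beta)=\beta-\tfrac34$.

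Finally, since $a_1(p)=\min K$, at every point $q$ we have $a_1(q)\geq a_1(p)\geq\beta-\tfrac34$ while $a_3(q)\leq\beta$, so $a_3(q)-a_1(q)\leq\tfrac34$. By the reduction of the first paragraph this gives $|W^+|+|W^-|\leq\sqrt{3/2}$ everywhere, and Proposition \ref{propWpm} identifies $(M,g)$ with one of $(S^4,g_0)$, $(\mathbb{RP}^4,g_0)$, or $(\mathbb{CP}^2,g_{FS})$ up to rescaling.

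I expect the main obstacle to be the calibration in the middle paragraph: one must verify both the nonobvious radical simplification giving $h(\beta)=\beta-\tfrac34$ and the monotonicity claim ensuring that $a_3(p)\leq\beta$ (rather than $=\beta$) already forces $a_1(p)\geq\beta-\tfrac34$. Everything else is bookkeeping: the passage from the single differential inequality at $p$ to a global pinching uses only that $p$ globally minimizes $K$, and the closing application of Proposition \ref{propWpm} is immediate.
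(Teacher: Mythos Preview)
Your proof is correct and follows the same route as the paper: apply Lemma \ref{Kupper} at the global minimum point to get a uniform lower bound $a_1\geq h(\beta)=\beta-\tfrac34$, then feed the resulting pinching $a_3-a_1\leq\tfrac34$ (equivalently $8a_3+4a_2-4\leq 3$) into Lemma \ref{k3k1} and conclude via Proposition \ref{propWpm}. You have in fact supplied the radical identity $\sqrt{3(69-12\sqrt{19})}=3\sqrt{19}-6$ that the paper's proof leaves implicit when it writes ``Substituting $\beta=\tfrac{14-\sqrt{19}}{12}$ yields $|W^+|+|W^-|\leq\sqrt{3/2}$.''
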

\begin{proof}
We consider Berger's decomposition Prop \ref{berger} at point $p$ realizing the minimum of sectional curvature. Let $\alpha=a_3$. By Lemma \ref{Kupper},
\[a_1\geq \alpha_1=\frac{1}{28}(15-8\alpha-\sqrt{3}\sqrt{96\alpha^2-80\alpha+19}).\]

Since this function of $\alpha$ is decreasing for $\alpha\geq \frac{1}{3}$, we conclude that,
\[a_1\geq \beta_1=\frac{1}{28}(15-8\beta-\sqrt{3}\sqrt{96\beta^2-80\beta+19}).\]

By the choice of $p$, the sectional curvature is at least $\beta_1$ at each point. 

By Lemma \ref{k3k1}, then at each point 
\begin{align*}
|W^+|+|W^-|&\leq \frac{6(K_3+K_2)+2(K_3-K_2)-4}{\sqrt{6}},\\
&\leq \frac{8K_3+4K_2-4}{\sqrt{6}}=\frac{4K_3-4K_1}{\sqrt{6}},\\
&\leq \frac{4\beta-4\beta_1}{\sqrt{6}}.
\end{align*}

Substituting $\beta=\frac{14-\sqrt{19}}{12}$ yields,
\[|W^+|+|W^-|\leq \frac{3}{\sqrt{6}}=\sqrt{\frac{3}{2}}.\]

The theorem then follows from Prop \ref{propWpm}. 
\end{proof}

Similarly, we have the following.
\begin{theorem}
\label{maindiff}
Let $(M, g)$ be an Einstein four-manifold $\Rc=g$ 
such that at each point, \[a_3-a_2\leq \beta=\frac{7-\sqrt{19}}{4}\approx .660275.\]
Then it is isometric to either $(S^4,g_{0})$, $(\mathbb{RP}^4, g_0)$, or $(\mathbb{CP}^2,\ g_{FS})$ up to rescaling. 
\end{theorem}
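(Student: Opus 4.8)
The plan is to follow the architecture of the proof of Theorem \ref{mainupper} verbatim, replacing the upper bound on sectional curvature by the bound on $a_3-a_2$ and substituting Lemma \ref{Kdiffbound} for Lemma \ref{Kupper}. First I would invoke Berger's decomposition (Prop. \ref{berger}) at a point $p$ realizing the global minimum of the sectional curvature, and set $\alpha=a_3-a_2$ at $p$; the hypothesis gives $\alpha\leq\beta=\frac{7-\sqrt{19}}{4}<2$, so Lemma \ref{Kdiffbound} applies and yields
\[
a_1(p)\geq \tfrac{1}{6}\bigl(3-2\alpha-\sqrt{1+8\alpha^2-4\alpha}\bigr).
\]
To turn this into a single numerical floor I would check that $h(\alpha)=\frac16(3-2\alpha-\sqrt{1+8\alpha^2-4\alpha})$ is nonincreasing on $[0,\beta]$. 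Differentiating gives $h'(\alpha)=-\frac16\bigl(2+\tfrac{8\alpha-2}{\sqrt{1+8\alpha^2-4\alpha}}\bigr)$, which is $\leq 0$: for $\alpha\geq\frac14$ the bracket is manifestly positive, while for $\alpha<\frac14$ the inequality $2\sqrt{1+8\alpha^2-4\alpha}\geq 2-8\alpha$ squares to $16\alpha(1-2\alpha)\geq0$, valid on $[0,\tfrac12]$. Hence $a_1(p)\geq\beta_1:=h(\beta)$.

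Since $p$ realizes the global minimum of the sectional curvature, the smallest Berger eigenvalue satisfies $a_1(q)\geq a_1(p)\geq\beta_1$ at every $q\in M$. I would then feed the pointwise data into the first estimate of Lemma \ref{k3k1}, whose parameters are $\alpha_{\mathrm{Lem}}=a_2+a_3=1-a_1$ and $\delta=2(a_3-a_2)$. Using $a_1+a_2+a_3=1$, its conclusion rewrites as
\[
|W^+|+|W^-|\leq\frac{6(1-a_1)-4+2(a_3-a_2)}{\sqrt 6}=\frac{2-6a_1+2(a_3-a_2)}{\sqrt 6}.
\]
Substituting the pointwise bounds $a_1\geq\beta_1$ and $a_3-a_2\leq\beta$ gives, at every point, $|W^+|+|W^-|\leq(2-6\beta_1+2\beta)/\sqrt6$.

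The decisive algebraic point, which explains the exact value of $\beta$, is that $\beta=\frac{7-\sqrt{19}}{4}$ is the root in $(0,1)$ of $8\beta^2-28\beta+15=0$, equivalently the value for which $\sqrt{1+8\beta^2-4\beta}=4-4\beta$; this forces $\beta_1=h(\beta)=\frac{2\beta-1}{6}$, hence $6\beta_1=2\beta-1$ and $2-6\beta_1+2\beta=3$. Therefore $|W^+|+|W^-|\leq\frac{3}{\sqrt6}=\sqrt{\frac32}$ everywhere, and Proposition \ref{propWpm} finishes the argument. I expect the only delicate steps to be the monotonicity of $h$ (so that the local estimate at $p$ propagates to a global constant) and the exact calibration of $\beta$: both are elementary, but they must be done with equalities rather than approximations, since the final constant has to reach the threshold $\sqrt{3/2}$ exactly.
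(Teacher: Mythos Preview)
Your proposal is correct and follows essentially the same argument as the paper: apply Lemma~\ref{Kdiffbound} at the point realizing the minimum sectional curvature, use monotonicity of the lower bound in $\alpha$ to get the global floor $a_1\geq\beta_1$, feed this together with the hypothesis $a_3-a_2\leq\beta$ into Lemma~\ref{k3k1}, and conclude via Proposition~\ref{propWpm}. You supply more detail than the paper does (the derivative check for monotonicity of $h$ and the explicit algebra showing $2-6\beta_1+2\beta=3$), but the route is identical.
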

\begin{proof}
We consider Berger's decomposition Prop \ref{berger} at point $p$ realizing the minimum of sectional curvature. Let $\gamma=a_3-a_2$. By Lemma \ref{Kdiffbound},
\[a_1\geq \min\{\frac{1}{6}(3-2\gamma-\sqrt{1+8\gamma^2-4\gamma}), \frac{1}{3}(2-\sqrt{1+3\gamma^2})\}.\]

Notice that each lower bound above is a decreasing function of $\gamma$, and the maximum value of $\gamma$ is greater than  $\frac{\sqrt{3}-1}{4}$, by Remark \ref{rmdiffK}, we conclude that,
\[a_1\geq \beta_1=\frac{1}{6}(3-2\beta-\sqrt{1+8\beta^2-4\beta}).\]

By the choice of $p$, the sectional curvature is at least $\beta_1$ at each point. 

By Lemma \ref{k3k1}, then at each point 
\begin{align*}
|W^+|+|W^-|&\leq \frac{6(a_3+a_2)+2(a_3-a_2)-4}{\sqrt{6}},\\
&\leq \frac{2-6a_1+2(a_3-a_2)}{\sqrt{6}}\\
&\leq \frac{2+2\beta-6\beta_1}{\sqrt{6}}.
\end{align*}
Substituting $\beta=\frac{7-\sqrt{19}}{4}$ yields,
\[|W^+|+|W^-|\leq \frac{3}{\sqrt{6}}=\sqrt{\frac{3}{2}}.\]

The theorem then follows from Prop \ref{propWpm}. 
\end{proof}

Now the proof of Theorem \ref{maineinstein} follows immediately.
\begin{proof}
Condition (a.) is considered in Theorem \ref{mainupper}.

Condition (b.) implies that 
\begin{align*}
2a_2+a_1 &\geq \frac{\sqrt{19}-3}{4},\\
a_2-a_3 &\geq \frac{\sqrt{19}-7}{4}.
\end{align*}
This is exactly the condition of Theorem \ref{maindiff} so the result follows. 
\end{proof}

\def\cprime{$'$}
\bibliographystyle{plain}
\bibliography{bioEin}

\def\cprime{$'$}
\begin{thebibliography}{10}

\bibitem{berger61}
Marcel Berger.
\newblock Sur quelques vari\'et\'es d'{E}instein compactes.
\newblock {\em Ann. Mat. Pura Appl. (4)}, 53:89--95, 1961.

\bibitem{Besse}
Arthur~L. Besse.
\newblock {\em Einstein manifolds}, volume~10 of {\em Ergebnisse der Mathematik
  und ihrer Grenzgebiete (3) [Results in Mathematics and Related Areas (3)]}.
\newblock Springer-Verlag, Berlin, 1987.

\bibitem{branson00}
T.~Branson.
\newblock Kato constants in {R}iemannian geometry.
\newblock {\em Math. Res. Lett.}, 7(2-3):245--261, 2000.

\bibitem{brendle10einstein}
Simon Brendle.
\newblock Einstein manifolds with nonnegative isotropic curvature are locally
  symmetric.
\newblock {\em Duke Math. J.}, 151(1):1--21, 2010.

\bibitem{CGH00weight}
David M.~J. Calderbank, Paul Gauduchon, and Marc Herzlich.
\newblock Refined {K}ato inequalities and conformal weights in {R}iemannian
  geometry.
\newblock {\em J. Funct. Anal.}, 173(1):214--255, 2000.

\bibitem{caotran1}
Xiaodong Cao and Hung Tran.
\newblock The {W}eyl tensor of gradient {R}icci solitons.
\newblock {\em Geom. Topol.}, 20(1):389--436, 2016.

\bibitem{cao14einstein}
Xiaodong Cao and Peng Wu.
\newblock Einstein four-manifolds of three-nonnegative curvature operator.
\newblock {\em Unpublished}, 2014.

\bibitem{costa04}
{\'E}zio de~Araujo~Costa.
\newblock On {E}instein four-manifolds.
\newblock {\em J. Geom. Phys.}, 51(2):244--255, 2004.

\bibitem{Derd83}
Andrzej Derdzi{\'n}ski.
\newblock Self-dual {K}\"ahler manifolds and {E}instein manifolds of dimension
  four.
\newblock {\em Compositio Math.}, 49(3):405--433, 1983.

\bibitem{gl99}
Matthew~J. Gursky and Claude Le{B}run.
\newblock On {E}instein manifolds of positive sectional curvature.
\newblock {\em Ann. Global Anal. Geom.}, 17(4):315--328, 1999.

\bibitem{H3}
Richard~S. Hamilton.
\newblock Three-manifolds with positive {R}icci curvature.
\newblock {\em J. Differential Geom.}, 17(2):255--306, 1982.

\bibitem{Lich58}
Andr{\'e} Lichnerowicz.
\newblock {\em G\'eom\'etrie des groupes de transformations}.
\newblock Travaux et Recherches Math\'ematiques, III. Dunod, Paris, 1958.

\bibitem{myer41}
S.~B. Myers.
\newblock Riemannian manifolds with positive mean curvature.
\newblock {\em Duke Math. J.}, 8:401--404, 1941.

\bibitem{st69}
I.~M. Singer and J.~A. Thorpe.
\newblock The curvature of {$4$}-dimensional {E}instein spaces.
\newblock In {\em Global {A}nalysis ({P}apers in {H}onor of {K}. {K}odaira)},
  pages 355--365. Univ. Tokyo Press, Tokyo, 1969.

\bibitem{tachibana}
Shun-ichi Tachibana.
\newblock A theorem of {R}iemannian manifolds of positive curvature operator.
\newblock {\em Proc. Japan Acad.}, 50:301--302, 1974.

\bibitem{tran16weyl}
Hung Tran.
\newblock On closed manifolds with harmonic {W}eyl curvature.
\newblock {\em Adv. Math.}, 322:861--891, 2017.

\bibitem{yangdg00}
DaGang Yang.
\newblock Rigidity of {E}instein {$4$}-manifolds with positive curvature.
\newblock {\em Invent. Math.}, 142(2):435--450, 2000.

\bibitem{zhang2016four}
Zhuhong Zhang.
\newblock Four-dimensional einstein manifolds with sectional curvature bounded
  from above.
\newblock {\em arXiv preprint arXiv:1606.01157}, 2016.

\end{thebibliography}

\end{document}